\newcommand{\tensor}{\otimes}
\newcommand\ST{\operatorname{ST}}
\newcommand\Wilc{Wilczynski}
\newcommand\Cclass{C-class}
\date{\today}
\begin{document}

\title[Symmetry gaps for higher order ordinary differential equations]{ Symmetry gaps for higher order ordinary differential equations}

\author{Johnson Allen Kessy}
\address{Department of Mathematics and Statistics, 
	UiT The Arctic University of Norway, 9037 Troms\o, Norway}
\email{johnson.a.kessy@uit.no}

\author{Dennis The}
\address{Department of Mathematics and Statistics, 
UiT The Arctic University of Norway, 9037 Troms\o, Norway}
\email{dennis.the@uit.no}

\makeatletter
\@namedef{subjclassname@2020}{%
  \textup{2020} Mathematics Subject Classification}
\makeatother

\subjclass[2020]{Primary:
35B06, 
53A55; 
Secondary:
17B66, 
57M60. 
} 
\keywords{Submaximal symmetry for ODE, symmetry gap problem, Cartan geometry}

\begin{abstract} 
The maximal contact symmetry dimensions for scalar ODEs of order $\ge 4$ and vector ODEs of order $\ge 3$ are well known.  Using a Cartan-geometric approach, we determine for these ODEs the next largest realizable ({\sl submaximal}) symmetry dimension.  Moreover, finer curvature-constrained submaximal symmetry dimensions are also classified.
\end{abstract}
\maketitle

 \section{Introduction}
 \label{S:intro}

Consider a system of $m \geq 1$ ordinary differential equations (ODEs) of order $n+1 \geq 2$ given by 
\begin{align} \label{ODE:sym}
\bu^{(n+1)} = \mathbf{f}(t,\bu,\dot{\bu},\ldots,\bu^{(n)}),
\end{align}
where $\bu$ is an $\R^m$-valued function of $t$, and $\bu^{(k)}$ is its $k$-th derivative. We will focus on the geometry of such ODEs under local {\em contact} transformations, which by the Lie--B\"acklund theorem agrees with the geometry under local {\em point} transformations when $m \geq 2$ (vector ODEs).

 Except when $n=m=1$ (scalar 2nd order), the ODE \eqref{ODE:sym} admits a finite-dimensional contact symmetry algebra and the largest realizable ({\sl maximal}) symmetry dimension $\fM$ is known -- see for example \cite[\S 1]{BLP2021} for a historical survey.  Indeed, the trivial ODE $\bu^{(n+1)} = 0$ is uniquely (up to contact equivalence) maximally symmetric among \eqref{ODE:sym}, cf. Corollary \ref{cor:maxmodel} below, and the dimension of its Lie algebra of (infinitesimal) contact symmetries is given by
 \begin{align} \label{E:max}
 \fM = \begin{cases}
 10, & \mbox{if } m=1, \,n=2 \,\,\mbox{ (scalar 3rd order)};\\
 (m+2)^2-1, & \mbox{if } m\geq 2,\, n=1\,\, \mbox{ (vector 2nd order)};\\ 
 m^2+(n+1)m+3, & \mbox{if } m=1,\,n\geq 3\,\, \mbox{ or } \,\,m,n \geq 2\,\, \mbox{ (higher order cases)}.
 \end{cases}
 \end{align}
 In contrast, all scalar 2nd order ODEs are locally contact equivalent to the trivial ODE $\ddot{u} = 0$, which admits an {\em infinite}-dimensional contact symmetry algebra.  Under point transformations, $\ddot{u}=0$ has point symmetry algebra of dimension $\fM = 8$ and is maximally symmetric.

 In all cases with a finite maximal symmetry dimension, a natural classification problem is to {\em determine the next largest realizable ({\sl submaximal}) symmetry dimension $\fS$}.  There is often a sizable gap between $\fM$ and $\fS$, so this is referred to as the {\sl symmetry gap} problem.  For ODEs, examples of this are given in Table \ref{tab:parcases}.  See \cite{KT2017} for details on these cases where the underlying geometric structure is a {\sl parabolic geometry} (see below).
 
 \begin{table}[h] 
 \centering
 \[
 \begin{array}{|lccc|} \hline 
\multicolumn{1}{|c}{\mbox{Geometry}}& \fS &\mbox{Sample ODE} & \mbox{Reference}  \\ \hline \hline
\begin{array}{l}
\mbox{Scalar 2nd order ODEs} \\ \mbox{mod point transformations}
\end{array}	 &3 & \begin{array}{l}
\ddot{u} = \exp(\dot{u})
 \end{array}& (1896)\, \mbox{\cite{Tresse1896}}\\
\begin{array}{l}
\mbox{Scalar 3rd order ODEs} \\ \mbox{mod contact transformations}
\end{array}	& 5& \begin{array}{l}
 \dddot{u}= b\dot{u} +u\\
 \end{array}& (2002)\, \mbox{\cite{WMQ2002}} \\
\begin{array}{l}
\mbox{Vector 2nd order ODEs} \\ \mbox{mod point transformations}
\end{array}	 & m^2+5 & 
 \begin{array}{l}
 \underset{(1\le a \le m)}{\ddot{u}^a = (\dot{u}^1)^3 \delta^a_m}\\
 \end{array}& \begin{array}{l}
 m=2: (2013)\, \mbox{\cite{CDT2013}}\\
 
 m \ge 3:  (2017)\, \mbox{\cite{KT2017}}
 \end{array} \\\hline
 	\end{array}
 	\]
 \caption{Submaximal symmetry dimensions $\fS$ for ODEs among parabolic geometries}
 \label{tab:parcases}
 \end{table}

 We consider the symmetry gap problem for higher order ODE (which are {\sl non-parabolic}), and prove that:
 \begin{thm} \label{T:main}
 Fix $(n,m)$ with $m=1,n \geq 3$ or $m,n\geq 2$.  Among the ODEs \eqref{ODE:sym} of order $n+1$, the submaximal contact symmetry dimension is
 \begin{align}
 \fS = \begin{cases}
 \fM - 1, & \mbox{if } m=1,\, n \in \{ 4, 6 \};\\
 \fM - 2, & \mbox{otherwise}.
 \end{cases}
 \end{align}
 \end{thm}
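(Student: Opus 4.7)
The plan is to treat Theorem \ref{T:main} via the Cartan-geometric / Tanaka-prolongation approach. I first encode each ODE \eqref{ODE:sym} as a (non-parabolic) Cartan geometry $(\mathcal{G}\to M,\omega)$ of type $(\mathfrak{g},P)$, where the filtered/graded Lie algebra
\[
\mathfrak{g} \;=\; \mathfrak{g}_{-\mu}\oplus\cdots\oplus\mathfrak{g}_0\oplus\mathfrak{g}_+
\]
is the Tanaka prolongation of the underlying symbol algebra and coincides with the contact symmetry algebra of the flat model $\bu^{(n+1)}=0$; the parabolic-like isotropy is $\operatorname{Lie}(P)=\mathfrak{g}_{\geq 0}$. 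Infinitesimal contact symmetries correspond bijectively to infinitesimal automorphisms of $\omega$, and local flatness of the Cartan geometry is equivalent to local contact equivalence with the flat model, whose symmetry dimension equals $\fM=\dim\mathfrak{g}$.

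The core of the proof is a bound for $\fS$ in terms of the \emph{harmonic curvature} $\kappa_H$, which carries the essential local invariants of the ODE and takes values in a finite-dimensional $P$-module $\mathbb{V}$. In the non-parabolic setting, $\mathbb{V}$ is obtained as the lowest-weight part of the filtered Tanaka cohomology associated with $(\mathfrak{g}_-,\mathfrak{g})$. After decomposing $\mathbb{V}$ into irreducible $G_0$-components and restricting to highest-weight representatives $\phi$, I apply the universal upper bound
\[
\fS \;\leq\; \mathfrak{U} \;:=\; \max_{0\neq\phi\in\mathbb{V}}\bigl(\dim\mathfrak{g}_- + \dim\mathfrak{ann}(\phi)\bigr),
\]
where $\mathfrak{ann}(\phi)\subseteq\mathfrak{g}_{\geq 0}$ is the infinitesimal stabilizer of $\phi$, including the positive-degree prolongations of its $\mathfrak{g}_0$-annihilator. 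A direct, case-by-case analysis of highest-weight directions in each irreducible summand should yield $\mathfrak{U}=\fM-1$ exactly in the exceptional scalar cases $m=1,\,n\in\{4,6\}$ and $\mathfrak{U}=\fM-2$ in all remaining higher-order cases.

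Finally, to confirm sharpness I exhibit, for each $(n,m)$, an explicit ODE whose contact symmetry algebra has dimension $\mathfrak{U}$. In the exceptional cases the model is expected to arise from a distinguished linear ODE whose only non-vanishing Wilczynski-type invariant singles out the exceptional irreducible submodule of $\mathbb{V}$; in the generic cases, either a linear ODE with a single nonvanishing Wilczynski component or a simple nonlinear equation in the spirit of the sample models in Table \ref{tab:parcases} should suffice. I expect the main obstacle to be the representation-theoretic step: identifying the precise $G_0$-submodule of $\mathbb{V}$ whose highest-weight vector admits an unusually large annihilator when $n\in\{4,6\}$, and then constructing a matching submaximally symmetric ODE whose symmetry dimension can be verified directly.
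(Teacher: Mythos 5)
Your overall architecture matches the paper's: encode the ODE as a regular, normal Cartan geometry of type $(G,P)$, prove $\fS \leq \fU$ via the annihilator of $\kappa_H$ together with its Tanaka prolongation, compute $\fU$ irrep-by-irrep, and close the gap with explicit models. However, two of your concrete predictions are wrong in ways that would derail the execution. First, you take $\kappa_H$ to be valued in the full positive-degree cohomology $H^2_+(\g_-,\g)$; in fact one must maximize only over the \emph{effective part} $\bbE \subsetneq H^2_+(\g_-,\g)$, the minimal $G_0$-submodule actually hit by $\kappa_H$ for geometries arising from ODEs. Not every filtered $G_0$-structure with this symbol comes from an ODE (there is, e.g., a $G_2$-invariant structure with the symbol of an 11th order scalar ODE that is not realizable by any ODE), so maximizing over all of $H^2_+$ can produce an upper bound that no ODE attains, and the realizability step would then fail to pin down $\fS$.

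Second, and more seriously, your account of the exceptional cases $m=1$, $n\in\{4,6\}$ is incorrect on both the mechanism and the models. In the scalar case $\g_0$ is two-dimensional abelian, every irreducible summand of $\bbE$ is one-dimensional, and $\dim\fann(\phi)=1$ for every nonzero $\phi$ in every irrep; there is no summand with an ``unusually large annihilator.'' The extra dimension comes instead from failure of prolongation-rigidity: $\fa^\phi_1 = \R\sfY \neq 0$ exactly when the bi-grade of the summand is a multiple of $(n,2)$, which happens only for the C-class modules $\bbB_6$ (bi-grade $(4,2)$, $n=4$) and $\bbA_4$ (bi-grade $(3,1)$, $n=6$). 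The Wilczynski modules $\bbW_r$ all have bi-grade $(r,0)$ and are therefore always prolongation-rigid, so a linear ODE (whose harmonic curvature is concentrated in Wilczynski modules) can never realize $\fM-1$; linear models give at most $\fM-2$. The realizing models in the exceptional cases are necessarily nonlinear C-class equations, namely $9(u_2)^2u_5 - 45u_2u_3u_4 + 40(u_3)^3 = 0$ for $n=4$ and $10(u_3)^3u_7 - 70(u_3)^2u_4u_6 - 49(u_3)^2(u_5)^2 + 280u_3(u_4)^2u_5 - 175(u_4)^4 = 0$ for $n=6$, with $\kappa_H$ concentrated in $\bbB_6$ and $\bbA_4$ respectively. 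Your plan as stated would either fail to find a model attaining $\fM-1$ or would misidentify which summand is responsible, so the sharpness argument does not close without this correction.
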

 
 This corrects a recent conjecture \cite[\S 10]{BLP2021} for $\fS$ when 
 $m,n \geq 2$, stated as 
$\begin{cases}
\fM -2m+2,& \mbox{if}\,\,m \in \{2,3\};\\
\fM -2m+1,& \mbox{if}\,\,m \ge 4.
\end{cases}$

The results for scalar ODEs recover Lie's \cite{Lie1924} (see \cite[p.205]{Olver1995} for a brief summary), which he obtained based on \cite[Thm.6.36]{Olver1995} and the complete classification of Lie algebras of contact vector fields on the (complex) plane.  This requires classifying the fundamental differential invariants for each such Lie algebra of vector fields as well as investigating their Lie determinants (see \cite[Table 5]{Olver1995}).  However, attempting to apply Lie's approach to vector ODEs in order to prove Theorem \ref{T:main} is not feasible: this would require as a first step classifying Lie algebras of vector fields in general dimension.  This is far out of reach, as evidenced by the fact that even the classification in dimension three remains incomplete (although large branches have been settled), see \cite{Doubrov2017, Schneider2018} for recent progress and references therein.  Moreover, even if such classifications were available, the computations involved with the approach would be extremely tedious, and establishing refinements as in Theorem \ref{T:main2} below would be even more difficult.  Different techniques are required to address the vector cases.

Our approach is based on a categorically equivalent reformulation of ODEs $\cE$ given by \eqref{ODE:sym} (mod contact) as {\sl regular, normal Cartan geometries} $(\cG \to \cE, \omega)$ of type $(G,P)$, for some appropriate Lie group $G$ and closed subgroup  $P \subset G$ (see \S \ref{S:TODE}).  The construction of such canonical Cartan connections $\omega$ for ODEs was discussed in \cite{DKM1999,Doubrov2001,DM2014,CDT2020}.  The trivial ODE corresponds to the flat model $(G \to \sfrac{G}{P},\omega_G)$, which has symmetry dimension $\dim G$, and more generally $\dim G$ bounds the symmetry dimension of any Cartan geometry of type $(G,P)$, so $\fM = \dim G$.

 Parabolic geometries are Cartan geometries modelled on the quotient of a semisimple Lie group by a parabolic subgroup.  For this diverse class of geometric structures (whose underlying structures includes those ODEs from Table \ref{tab:parcases}), significant progress on the symmetry gap problem was made in \cite{KT2017}.  In particular, a universal algebraic upper bound $\fU$ on $\fS$ was established, effective methods for the computation of $\fU$ were given in the complex or split-real settings, and in almost all of these cases it was shown that $\fS = \fU$ by presenting (abstract) models. 

 All higher order ODEs ($m=1,\,n\geq 3$ or $m,n \geq 2$) admit equivalent descriptions as {\sl non-parabolic} Cartan geometries.   For these ODEs, we adapt certain key features from the parabolic study to our specific non-parabolic setting. The main ingredients for establishing $\fS \leq \fU$ are {\sl harmonic curvature} $\kappa_H$, which is a complete obstruction to local flatness, and {\sl Tanaka prolongation}, both of which have parallels in the ODE setting.  The key technical fact underpinning our $\fS \leq \fU$ proof is that $\kappa_H \not\equiv 0$ is valued in a certain completely reducible $P$-module, which was established in \cite[Cor.3.8]{CDT2020}, so only the action of the reductive part $G_0 \subset P$ is relevant.  (In fact, the strategy of our proof is a simplified version of that given in \cite{KT2018}, which yields a stronger statement than the approach from \cite{KT2017} -- see Remark \ref{R-BT}.)  Our upper bound result is formulated in Theorem \ref{th:uub}.
 
 By complete reducibility, the codomain of $\kappa_H$ can be identified with a certain proper $G_0$-submodule $\bbE \subsetneq H^2_+(\g_-,\g)$ of a Lie algebra cohomology group.  This {\sl effective part} $\bbE$ has already been computed in the literature by Doubrov \cite{Doubrov2000,Doubrov2001} for scalar ODEs, Medvedev \cite{Medvedev2010} for vector 3rd order ODEs, and by Doubrov--Medvedev \cite{DM2014} for vector higher order ODEs.  In \S \ref{sect:main}, we summarize their classifications in Tables \ref{tab:scalar} and \ref{tab:vector}, organized as irreducible $G_0$-submodules $\bbU \subset \bbE$, and use these to efficiently compute the corresponding restricted quantities $\fU_\bbU$, from which $\fU$ can be obtained via \eqref{E:U-decomp}.
 
 We note that the aforementioned upper bound proof also yields the finer results $\fS_\bbU \leq \fU_\bbU$, where $\fS_\bbU$ is analogous to $\fS$ but with the additional constraint that $\kappa_H\not\equiv 0$ is valued in $\bbU \subset \bbE$.  Thus, we can consider the finer symmetry gap problem of determining $\fS_{\bbU}$ for a fixed $\bbU$.  For ODEs that are parabolic geometries, such constrained problems were resolved in \cite{KT2017}.  In our non-parabolic setting, using the known fundamental (relative) differential invariants for higher order ODEs derived in  \cite{Wilczynski1905, Se-ashi1988, Doubrov2001, Medvedev2011, DM2014}, we exhibit realizability of $\fU_{\bbU}$ in \S \ref{S:ODE-KH} by finding explicit ODEs realizing these symmetry dimensions and with $\kappa_H \neq 0$ concentrated in $\bbU$. In addition to proving Theorem \ref{T:main}, we obtain the following curvature-adapted result:

 \begin{thm} \label{T:main2}
 Fix $(n,m)$ with $m=1,n \geq 3$ or $m,n\geq 2$, and consider ODEs \eqref{ODE:sym} of order $n+1$.  Let $\bbU$ be a $G_0$-irrep contained in the effective part $ \bbE \subsetneq H_{+}^2(\g_{-},\g)$.  Then $\fS_{\bbU}$ is given in Table \ref{tab:T:main2}.
 \end{thm}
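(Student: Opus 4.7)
The plan is to prove Theorem \ref{T:main2} by a two-sided estimate: the upper bound $\fS_\bbU \leq \fU_\bbU$ follows directly from Theorem \ref{th:uub}, so the substantive work is to exhibit, for each irreducible $G_0$-submodule $\bbU \subset \bbE$ appearing in Tables \ref{tab:scalar} and \ref{tab:vector}, an explicit ODE of the form \eqref{ODE:sym} whose contact symmetry algebra has dimension exactly $\fU_\bbU$ and whose harmonic curvature $\kappa_H$ is nonzero and takes values inside $\bbU$.

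First I would compute $\fU_\bbU$ in purely algebraic terms: fix a nonzero $\phi \in \bbU$, form its annihilator $\mathfrak{a}(\phi) \subset \g_0$, and then compute the Tanaka-style prolongation of $\g_{-} \oplus \mathfrak{a}(\phi)$ inside $\g$. Combining this with the decomposition \eqref{E:U-decomp} yields $\fU_\bbU$. Using the explicit description of $\g = \g_{-} \oplus \g_0 \oplus \g_{+}$ associated with higher order ODE geometries, this reduces to a finite table of highest-weight calculations, done separately for the scalar $(m=1)$ and vector $(m \geq 2)$ cases, with attention to which irreducible summands actually occur for the given $(m,n)$.

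For the realizability step, I would use the fundamental Wilczynski-type relative invariants of \cite{Wilczynski1905, Se-ashi1988, Doubrov2001, Medvedev2011, DM2014}, which identify the irreducible components of $\bbE$ with explicit differential expressions in the right-hand side $\mathbf{f}$ of \eqref{ODE:sym}. For each $\bbU$, I would write down a normal-form ODE -- typically a linear or monomial perturbation of $\bu^{(n+1)}=0$ -- in which the invariant components dual to $\bbU$ are nonzero while all other irreducible invariants vanish. The symmetry algebra of such a model can then be computed explicitly and its dimension compared against $\fU_\bbU$.

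The main obstacle will be the realizability, specifically ensuring for each $\bbU$ that the chosen ODE (i) has $\kappa_H$ strictly concentrated in $\bbU$ -- that is, every irreducible component of $\bbE$ outside $\bbU$ vanishes on the model -- and (ii) attains the full symmetry dimension $\fU_\bbU$. Verifying (i) amounts to evaluating a finite list of relative invariants on the normal form, a direct but delicate symbolic computation. Verifying (ii) requires producing enough explicit contact symmetries of the chosen ODE to match the abstract prolongation count; here I expect the manifest linear or affine symmetries inherited from the perturbation structure to supply the bulk of the algebra, with the remaining symmetries to be read off from the residual structure of the model. The scalar cases can be cross-checked against Lie's classification \cite{Lie1924}, but the vector cases are genuinely new and form the computational core of the argument.
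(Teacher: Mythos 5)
Your plan coincides with the paper's strategy for the majority of cases (the upper bound $\fS_\bbU \le \fU_\bbU$ from Theorem \ref{th:uub}, the algebraic computation of $\fU_\bbU$ via annihilators and Tanaka prolongation as in Lemmas \ref{lem:uubs} and \ref{+prs}, and explicit realizing models as in Tables \ref{tab:linearmodels}, \ref{tab:submax1}, \ref{tab:submax-Kh}). However, there is a genuine gap: you assume throughout that for every $\bbU$ one can exhibit an ODE with $\kappa_H$ concentrated in $\bbU$ and symmetry dimension \emph{exactly} $\fU_\bbU$. This is false, and Table \ref{tab:T:main2} itself records the failures: for the scalar cases $(n,\bbU)=(3,\bbB_3)$, $(\ge 5,\bbA_3)$ and $(\ge 7,\bbA_4)$ one has $\fS_\bbU < \fU_\bbU$ (and for $(\ge 6,\bbA_3)$ and $(\ge 7,\bbA_4)$ the exact value is not even pinned down). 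Your search for realizing models would simply fail there, and nothing in your proposal addresses how to \emph{prove} non-realizability of the upper bound. The paper handles this by two separate mechanisms: for most exceptional cases it invokes the known classification of submaximally symmetric scalar ODEs (Lie/Olver) to show that every ODE with $n+3$ symmetries has $\kappa_H$ concentrated in a \emph{different} module, forcing $\fS_\bbU \le n+2$; and for the $(6,\bbA_3)$ case, which that classification does not settle, it runs a Cartan-geometric non-existence argument for algebraic models (Appendix \ref{S-Exc}, Theorem \ref{T-Exc}), showing via the Jacobi identity that no filtered deformation with the required curvature exists.

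A secondary, more technical gap: your step (i) of ``evaluating a finite list of relative invariants on the normal form'' is not always available, because some invariants ($\cB_6$, $\cA_4$) are only defined modulo differential ideals containing other invariants that do not vanish on the relevant models. The paper circumvents this with a dimension-count argument (Lemma \ref{L:CC}): if a C-class model has $\ge n+3$ symmetries, its harmonic curvature must lie in a single irreducible C-class component, since no two of the relevant bi-grades are proportional. Without something like this, you cannot certify $\img(\kappa_H)\subset\bbA_2$ for the model $nu_{n-1}u_{n+1}-(n+1)(u_n)^2=0$ when $n\ge 4$.
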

 
 \begin{table}[h] 
	\centering
	\begin{tikzpicture}
	\node[below left]{$
		\begin{array}{|cccl|} \hline 
		n &m & G_0 \mbox{-irrep }\, \bbU \subset \bbE & \mathfrak{S}_{\bbU}\\ 
		\hline\hline
		\geq 3 & 1 & \underset{(3 \le r \le n+1)}{\bbW_r} & \fM-2 = \fU_{\bbW_r}\\ \hline
		3& 1&\bbB_3&\fM -3 = \fU_{\bbB_3}-1 \\
		3& 1&\bbB_4&\fM -2=\fU_{\bbB_4} \\
		4& 1&\bbB_6&\fM-1 =\fU_{\bbB_6}\\
		\ge 4&1 &\bbA_2&\fM-2=\fU_{\bbA_2}\\
		5& 1&\bbA_3& \fM-3= \fU_{\bbA_3}-1\\
		\ge6& 1&\bbA_3& \leq \fM-3= \fU_{\bbA_3}-1\\
		6&1 &\bbA_4&\fM -1= \fU_{\bbA_4}\\
		\geq 7 &1 &\bbA_4& \fM -3 = \fU_{\bbA_4}-1 \mbox{ or } \fM - 4 \\ 	\hline
		\end{array}$};
	\node[below right]{$
		\begin{array}{|cccl|} \hline 
		n &m &G_0\mbox{-irrep }\, \bbU \subset \bbE  &\fS_{\bbU} = \fU_{\bbU}\\ 
		\hline\hline	
		\ge 2 &\ge 2  &\underset{(2 \le r \le n+1)}{\bbW_r^{\tf}} & \fM -2m+1  \\
		\geq 2&\ge 2 & \underset{(3 \le r \le n+1)}{\bbW_r^{\tr}} & \fM-2 \\ \hline
		2 &\ge 2 &\bbB_4  &\fM -m \\ 
		2& \ge 2 & \bbA_2^{\tf}  &\fM -2m+2 \\ 	
		\ge 2& \ge 2 & \bbA_2^{\tf}  &\fM -2m+1 \\  		
		\geq 3& \ge 2& \bbA_2^{\tr}   &\fM -m -1 \\ \hline
		\end{array}$};
	\end{tikzpicture}
	
	(Recall $\fM = m^2+(n+1)m+3$ from \eqref{E:max}.)\\[0.1in]
	\caption{Curvature-constrained submaximal symmetry dimensions for ODEs of order $n+1$ } 
	\label{tab:T:main2}
\end{table}

 We note that all vector cases and most scalar cases satisfy $\fS_{\bbU} = \fU_{\bbU}$.  The exceptional scalar cases are: $(n, \bbU) = (3, \bbB_3), (\ge 5, \bbA_3)$ or $(\ge 7, \bbA_4)$.  The assertions $\fS_{\bbU} < \fU_{\bbU}$ here can be deduced from the known classification of submaximally symmetric scalar ODEs (see \cite[p. 206]{Olver1995}).  In Appendix \ref{S-Exc}, we outline an alternative algebraic method for establishing these $\fS_{\bbU} < \fU_{\bbU}$ exceptions.
 
 We conclude this introduction with explicit examples of ODEs (in Tables  \ref{tab:curvscalar} and \ref{tab:curvODE}) that realize $\fS_{\bbU}$ from Table \ref{tab:T:main2} (aside from the above exceptions).  We use the notation $\bu^{(k)} := (u_k^1,\ldots,u_k^m)$ for the $k$-th derivative of $\bu :=(u^1,\ldots,u^m)$ with respect to $t$.  The assertions about the given ODEs  can be directly verified using the relative invariants summarized in \S \ref{S:ODE-KH}  and explicit infinitesimal symmetries given in Tables \ref{tab:linearmodels},  \ref{tab:submax1}, and \ref{tab:submax-Kh}.

\begin{table} [h]
\centering
\[
\begin{array}{|ccc|} 	
\hline 
n &G_0 \mbox{-irrep }\, \bbU \subset \bbE &\mbox{Example ODE with}\, \img(\kappa_H) \subset \bbU \\ 
\hline	\hline
\ge 3 & \underset{(3 \leq r \leq n+1)}{\bbW_r} 
& u_{n+1} = u_{n+1-r} \\ \hline
3 & \bbB_4 & \multirow{2}{*}{$nu_{n-1}u_{n+1}-(n+1)(u_{n})^2 = 0$} \\
\ge 4&\bbA_2&\\
\hline
4 & \bbB_6 & 9 (u_2)^2 u_5 - 45 u_2 u_3 u_4 + 40(u_3)^3 = 0\\ \hline
6 &\bbA_4 & \begin{array}{c}
10(u_{3})^3u_{7}-70(u_{3})^2u_{4}u_{6} - 49(u_{3})^2(u_{5})^2\\
+ 280u_{3}(u_{4})^2u_{5} -175(u_{4})^4 = 0
\end{array} \\
\hline
\end{array}
\]
\caption{Scalar ODEs of order $n+1 \geq 4$ realizing $\fS_\bbU$}
\label{tab:curvscalar}
\end{table}

 \begin{table} [h]
 \centering
 \[
 \begin{array}{|ccc|} \hline
 n&G_0 \mbox{-irrep }\, \bbU \subset \bbE & \mbox{Example ODE with } \img(\kappa_H) \subset \bbU \\\hline \hline 
 \geq 2  & \underset{(3 \leq r \leq n+1)}{\bbW_r^{\tr}} & \underset{(1 \leq a \leq m)}{u_{n+1}^a = u_{n+1-r}^a}\\ \hline	 
 \ge 2 & \underset{(2 \leq r \leq n+1)}{\bbW_r^{\tf}} &
 \underset{(1 \leq a \leq m)}{u_{n+1}^a = u_{n+1-r}^2 \delta_1^a}
  \\ \hline
	2 &\bbB_4  & \multirow{3}{*}{$\begin{array}{c}
		\underset{(1 \le a \le m)}{u_{n+1}^a= \displaystyle\frac{(n+1)u_n^1 u_n^a}{nu^1_{n-1}}}	\\
		\end{array}$}\\[0.5cm]
	\ge 3 &\bbA_2^{\tr}& \\
	\hline			
	\ge 2  &\bbA_2^{\tf}  & 
	\begin{array}{c}
	\underset{(1 \le a \le m)}{u_{n+1}^a= (u_n^2)^2\delta_1^a}	\\
	\end{array} \\
	\hline
	\end{array}
	\]
	\caption{Vector ODEs of order $n+1 \geq 3$ (for $m \geq 2$ functions) realizing $\fS_\bbU $}
	\label{tab:curvODE}
\end{table}

 \section{An upper bound  on submaximal symmetry dimensions}

 We begin by reviewing the Cartan-geometric perspective on ODEs, and then use it to prove an upper bound formula for submaximal symmetry dimensions (Theorem \ref{th:uub}).
 
\subsection{Canonical Cartan connections}
\subsubsection{ODEs as filtered $G_0$-structures} 
\label{subsec:ODEfilt}

Consider the space $J^{n+1}(\R, \R^m)$ of $(n+1)$-jets of  smooth maps from $\R$ into $\R^m$, with  the natural projection $\pi_n^{n+1} : J^{n+1}(\R, \R^m) \to J^{n}(\R, \R^m)$ and  denote by $C$ the {\sl Cartan distribution} on it.   Denoting $\bu_r = (u_r^1,\ldots,u_r^m)$, we let $(t, \bu_0, \bu_1,\ldots,\bu_{n+1})$ be standard (bundle-adapted) local coordinates on $J^{n+1}(\R, \R^m)$, for which the Cartan distribution $C$ is given by
\begin{align}
C = \langle \partial_t + \bu_1 \partial_{\bu_0} + \ldots + \bu_{n+1} \partial_{\bu_{n}}, \partial_{\bu_{n+1}} \rangle.
\end{align}
 (Here, $\bu_1 \partial_{\bu_0}$ is our compact notation for $\sum_{a=1}^m u_1^a \partial_{u_0^a}$, etc. and $\partial_{\bu_{n+1}}$ refers to $\partial_{u_{n+1}^1},\ldots,\partial_{u_{n+1}^m}$.)

We will consider \eqref{ODE:sym} up to {\sl contact transformations}. These are diffeomorphisms $\phi$ of $J^{n+1}(\R, \R^m)$ that preserve the distribution $C$, i.e.\ $\phi_{\ast} (C) = C$. By the Lie--B\"{a}cklund theorem, such transformations are the {\sl prolongations} \cite{Olver1995} of contact transformations on $J^1(\R, \R^m)$.  Moreover, for $m \ge 2$ they are the prolongations of diffeomorphisms on $J^0(\R, \R^m) \cong \R \times \R^m$ ({\sl point transformations}). At the infinitesimal level, a {\sl contact vector field} $\xi$ is a vector field whose flow is a (local) contact transformation.  Equivalently, $\cL_\xi C \subset C$, where $\cL_\xi$ is the Lie derivative with respect to $\xi$.

 Rephrased geometrically, the $(n+1)$-st order ODE \eqref{ODE:sym} is a codimension $m$ submanifold $\mathcal{E} = \{ \bu_{n+1} = \mathbf{f} \}$ in  $J^{n+1}(\R, \R^m)$ transverse to the projection map $\pi_n^{n+1}$. So,  $\cE$ can be (locally) identified  with its diffeomorphic image in $J^n(\R, \R^m)$.
\begin{defn}\label{Symm}
A {\sl contact symmetry} of the ODE $\cE \subset J^{n+1}(\R,\R^m)$ is a contact vector field $\xi$ on $J^{n+1}(\R, \R^m)$ that is tangent to $\cE$.
\end{defn}  

We associate $\cE$  with a pair $(E , V)$ of subdistributions of $C$ described below:
\begin{itemize}
	\item the line bundle $E$ over $\cE$ whose integral curves are lifts of solution curves to \eqref{ODE:sym};
	\item the rank $m$ Frobenius-integrable distribution $V := \ker(d\pi^{n+1}_n|_\cE)$.
\end{itemize}
As proven in \cite[Thm 1]{DKM1999}, the pair $(E, V)$ encodes $\cE$ up to the contact transformations and therefore defines a geometric structure associated to \eqref{ODE:sym}.

 Equivalently, a contact symmetry of the ODE $\cE \subset J^{n+1}(\R,\R^m)$ is a  vector field $\xi$ on $\cE$  such that $\cL_\xi E \subset E$ and $\cL_\xi V \subset V$.
In standard local coordinates,
\begin{align} \label{E:EV}
E = \left\langle \tfrac{d}{dt} :=\partial_t + \mathbf{u}_1\partial_{\mathbf{u}_0} + \cdots + \mathbf{u}_{n}\partial_{\mathbf{u}_{n-1}} + \mathbf{f}\partial_{\mathbf{u}_{n}}\right\rangle, \quad
V= \left \langle \partial_{\mathbf{u}_n} \right \rangle.
\end{align}
In the sequel, we shall refer to $\frac{d}{dt}$ as the {\sl total derivative}.

The distribution $D := E \oplus V \subset T\mathcal{E}$ is bracket-generating and its weak-derived flag defines a filtration on the tangent bundle $T\cE$: 
\begin{align}\label{F-ODE}
T\mathcal{E} = D^{-n-1} \supset \cdots \supset D^{-2} \supset D^{-1},
\end{align}
where $D^{-1} := D$ and $D^{-j-1} := D^{-j} + [D^{-j}, D^{-1}]$ 
for $j > 0$. Then $(\mathcal{E}, \{D^j\})$  becomes a \emph{filtered manifold}, since the Lie bracket of vector fields on $\cE$ is compatible with the tangential filtration $\{D^j\}$, i.e
\begin{align}
 [\Gamma(D^i), \Gamma(D^j)] \subset \Gamma(D^{i+j}). 
\end{align}
 From \eqref{E:EV}, we can moreover verify that
\begin{align}\label{c:SR}
[\Gamma(D^i), \Gamma(D^j)] \subset \Gamma(D^{\min(i,j)-1}), 
\end{align}
which is a stronger condition if $i,j \le -2$.

Furthermore, \eqref{ODE:sym} admits an equivalent description as a {\sl filtered $G_0$-structure} described below.
The {\sl associated graded} to the filtration \eqref{F-ODE} is given by
\begin{equation*}
\gr(T \mathcal{E}) := \bigoplus^{-1}_{j=-n-1} \gr_j(T\mathcal{E}),\quad \text{where}\quad \gr_j(T\mathcal{E}) :=  D^j\mathcal{E} / D^{j+1}\mathcal{E}.
\end{equation*}
For $x \in \cE$, the Lie bracket of vector fields induces a (Levi) bracket on $\fm (x):=\gr(T_x \cE )$  turning it into a  nilpotent graded Lie algebra (NGLA) with $\fm_j(x):= \gr_j(T_x\cE)$. It is called the {\sl symbol algebra} at $x$. For distinct points $x, y \in\cE $, $\mathfrak{m}(x)$ and $\fm (y)$ belong to the same NGLA isomorphism class.  Let $\fm$ be a fixed NGLA with $\mathfrak{m} \cong \fm (x), \forall x \in \cE$. Since $D$ is bracket-generating, then $\fm$ is generated by $\fm_{-1}$. 

For $x \in \cE$, denote by $F_{\gr}(x)$ the set of all NGLA isomorphisms from $\mathfrak{m}$ to $\mathfrak{m}(x)$ and $F_{\gr}(\cE) := \bigcup_{x \in \cE} F_{\gr}(x)$. Then $F_{\gr}(\cE) \to \cE$ is a principal fiber bundle with structure group $\aut_{\gr}(\fm)$ consisting of all graded automorphisms of $\fm$. In fact, $\aut_{\gr}(\mathfrak{m}) \hookrightarrow\GL(\fm_{-1})$, since $\fm $ is generated by $\fm_{-1}$.

The splitting of $D$ implies a splitting of $\fm_{-1}$.  Let $G_0 \leq \aut_{\gr}(\fm)$ be the subgroup preserving this splitting of $\fm_{-1}$.  There is a corresponding proper subbundle $\cG_0 \to \cE$, which is a principal fiber bundle with reduced structure group  $G_0 \cong \R^{\times} \times \GL_m$.  This realizes the ODE as a so-called {\sl filtered $G_0$-structure} \cite[Defn 2.2]{Andreas2017}. We immediately caution that not all filtered $G_0$-structures arise from ODEs (see Remark \ref{R-G0}).

 \subsubsection{The trivial ODE}
 \label{S:TODE}
 
 Consider the trivial system of $m \ge 1$ ODEs $\bu_{n+1} = 0$ of order $n+1$.  Throughout, we will restrict to the {\sl higher order} cases $m=1,n\geq 3$ and $m,n\geq2$.  The contact symmetry vector fields for the trivial ODE were given in \cite[Section 2.2]{CDT2020}.  Abstractly, the contact symmetry algebra $\g$ has the structure
 \begin{align}
 \g := \mathfrak{q} \ltimes V,\; \quad\mbox{where}\quad \mathfrak{q}:=\fsl_2 \times \mathfrak{gl}_m, \quad V := \V_n \tensor W.
 \end{align}
 Here, $\V_n$ is the unique (up to isomorphism) $\fsl_2$-irrep of dimension $n+1$ and $W = \R^m$ is the standard representation of $\mathfrak{gl}_m$.  The trivial ODE admits the maximal symmetry dimension among \eqref{ODE:sym} for fixed $(n,m)$, c.f.\ Corollary \ref{cor:maxmodel}.  Consequently, we denote:
 \begin{align} \label{E:maxsym}
 \fM := \dm \g = m^2 + (n+1)m + 3.
 \end{align}
 
 We work with the following basis for $\g$.  Let $\{ w_a \}$ be the standard basis for $W = \R^m$, let $\gl_m \cong  \gl(W)$ be spanned by $\{ e^a_b \}$, where $e^a_b w_c = \delta^a_c w_b$, and let $\id_m := \sum_{a=1}^m e^a_a$.  Letting $\{x,y\}$ be the standard basis for $\R^2$, consider the standard $\fsl_2$-triple
 \begin{align}
 \sfX = x\partial_y,\quad \sfH = x\partial_x - y\partial_y, \quad \sfY = y\partial_x,
 \end{align}
 and consider the weight vectors for $\V_n$ given by
 \begin{align}
 E_{i} = \frac{1}{i!}x^{n-i}y^{i}, \quad i=0, \ldots, n.
 \end{align}

 Following \cite{Doubrov2001, DM2014}, we give $\g$ the structure of a $\mathbb{Z}$-graded Lie algebra $\g = \g_{-n-1} \oplus \ldots \oplus \g_1$, where
 \begin{align} \label{E:grading}
 \begin{split}
 \g_{1} &=  \R \sfY, \quad\g_{0} =  \R \sfH \oplus \gl_m, \quad
 \g_{-1} =  \R \sfX \oplus (\R E_n \tensor W),\\
 \g_{i} &= \R E_{n+1+i} \tensor W, \quad i = -2,\ldots, -n-1.
 \end{split}
 \end{align}
We note that $\g_{-} \cong \fm$, the symbol algebra defined in \S  \ref{subsec:ODEfilt}.

 The splitting on $\g_{-1}$ reflects the splitting on the distribution $D = E \oplus V$ from \S \ref{subsec:ODEfilt}. 
 Note that $\g_0$ is reductive and $\g_-$ is generated by $\g_{-1}$.  Alternatively, introducing the {\sl grading element} 
 \begin{align} \label{E:gr-elt}
 \sfZ := -\frac{1}{2}\left(\sfH + (n+2)\id_m \right),
 \end{align}
 the eigenspaces of $\ad_{\sfZ} \in \gl(\g)$ are precisely $\g_i = \{ x \in \g : [\sfZ,x] = i x \}$ for all $i \in \Z$.  We visualize this as in Figure \ref{F:grading}.

  \begin{figure}[h]
 \[
 \begin{tikzpicture}[scale=2,baseline=-3pt]
 \draw (0.05,-1) ++(0,0) node {$\cdots$}; 
 \draw[thick,red] (0,0) -- (1,0);
 \draw[thick] (0,0) -- (-1,0);
 \draw[thick] (0,0) -- (-1.5,-1);
 \draw[thick] (0,0) -- (-0.5,-1);
 \draw[thick] (0,0) -- (0.5,-1);
 \draw[thick] (0,0) -- (1.5,-1);
 \filldraw[red] (1,0) circle (0.05);
 \filldraw[red] (0,0) circle (0.05);
 \filldraw[black] (1.5,-1) circle (0.05);
 \filldraw[black] (0.5,-1) circle (0.05);
 \filldraw[black] (-0.5,-1) circle (0.05);
 \filldraw[black] (-1.5,-1) circle (0.05);
 \filldraw[black] (-1,0) circle (0.05);
 \draw (-1,0) ++(0,-0.2) node {\tiny $-1$};
 \draw (0,0) ++(0,-0.2) node {\tiny $0$};
 \draw (1,0) ++(0,-0.2) node {\tiny $1$};
 \draw (1.5,-1) ++(0,-0.2) node {\tiny $-1$};
 \draw (0.5,-1) ++(0,-0.2) node {\tiny $-2$};
 \draw (-0.5,-1) ++(0,-0.2) node {\tiny $-n$};
 \draw (-1.5,-1) ++(0,-0.2) node {\tiny $-n-1$};
 \draw (-1,0) ++(0,0.2) node {$\sfX$};
 \draw (0,0) ++(0,0.2) node {$\sfH, \id_m$};
 \draw (1,0) ++(0,0.2) node {$\sfY$};
 \draw (-1.5,-1) ++(0,0.2) node {$E_0$};
 \draw (-0.5,-1) ++(-0.1,0.2) node {$E_1$};
 \draw (0.5,-1) ++(0.15,0.2) node {$E_{n-1}$};
 \draw (1.5,-1) ++(0,0.2) node {$E_n$};
 \draw [draw=black,dashed] (-0.1,-0.1) rectangle (1.1,0.1);
 \end{tikzpicture}
 \]
 \caption{Grading on $\g$, with basis specified in the scalar case}
 \label{F:grading}
 \end{figure}
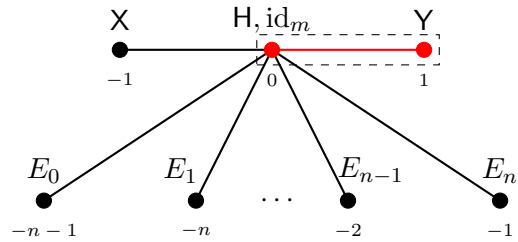

 We also endow $\g$ with the corresponding filtration $\g^{i} := \sum_{j \ge i}\g_{j}$, and let 
 \begin{align}
 \mathfrak{p}:= \g^{0} = \langle \sfH, e^a_b, \sfY \rangle, \quad \mathfrak{p}_{+}:= \g^{1} =\langle \sfY \rangle.
 \end{align}
 Let $\gr_i : \g^i \to \g^i / \g^{i+1}$ denote the natural quotient and let $\gr(\g) := \bigoplus_i \gr_i(\g)$ denote the associated graded, which is isomorphic as a $\g_0 \cong \gr_0(\g)$ module to $\g$ as a graded Lie algebra.
 
 At the group level, let
 \begin{itemize}
 \item $m=1$: $G = \GL_2 \ltimes \mathbb{V}_n$ and $P = \ST_2 \subset \GL_2$, the subgroup of lower triangular matrices;
 \item $m \ge 2$: $G = (\SL_2 \times \GL_m) \ltimes V$ and $P = \ST_2 \times \GL_m $.
 \end{itemize}
 In either case, let $G_0 := \{ g \in P : \Ad_g (\g_0) \subset \g_0 \}$.  We note that the filtration on $\fg$ is $P$-invariant.

 \subsubsection{Cartan geometries}
 \label{subsec:Cartangeom}

 All ODEs \eqref{ODE:sym} are filtered $G_0$-structures, and these admit an equivalent description as (normalized) Cartan geometries of type $(G,P)$.  We describe the precise setup in this section.
  
 \begin{defn}
 A {\sl Cartan geometry} $(\cG \to M, \omega)$ of type $(G,P)$ consists of a (right) principal $P$-bundle $\cG \to M$ endowed with a $\g$-valued one-form $\omega \in \Omega^1(\cG,\g)$, called a \emph{Cartan connection}, such that:
\begin{enumerate}
	\item[(i)] For any $u \in \cG$, $\omega_u : T_u \cG \to \fg$ is a linear isomorphism;
	\item[(ii)] $\omega$ is $P$-equivariant, i.e.\ $R_g^{\ast}\omega = \text{Ad}_{g^{-1}} \circ \omega$ for any $g \in P$;
	\item[(iii)] $\omega(\zeta_A) = A$, where $A \in \mathfrak{p}$, where $\zeta_A$ is the {\sl fundamental vertical vector field} defined by $\zeta_A(u) := \frac{d}{dt}{\big|}_{t=0} \,u\cdot\exp(tA)$.
\end{enumerate}
\end{defn}

 Because of (i), the tangent bundle of $\cG$ is trivialized, i.e.\ $T \cG \cong \cG \times \fg$, and the $P$-invariant filtration on $\fg$ induces a corresponding filtration of $T\cG$:
 \begin{align}
 T^{-n-1} \cG \supset\ldots \supset T^{-1} \cG \supset T^0 \cG \supset T^1 \cG.
 \end{align}
 Let us also note the following consequence of (ii).  Fixing $u \in \cG$, consider a $P$-invariant vector field $\eta \in \Gamma(T\cG)^P$ with $A:= \omega(\eta_u) \in \mathfrak{p}$, and let $f$ be a $P$-equivariant function on $\cG$.  Then:
 \begin{align} \label{E:Pequiv}
	(\eta \cdot f)(u) &= \left.\frac{d}{dt} \right|_{t=0}\,f\left(u\cdot \exp(A t)\right) =  \left.\frac{d}{dt}\right|_{t=0}\, \exp(-At) \cdot f(u) =-A\cdot f(u).
 \end{align}

 The {\sl Klein geometry} $(G \to \sfrac{G}{P}, \omega_G)$, where $\omega_G$ is the \emph{Maurer--Cartan form} on $G$, is called the \emph{flat model} for Cartan geometries of type $(G,P)$.  Given a Cartan geometry, its {\sl curvature form} $K \in \Omega^2(\cG,\g)$ is given by 
 \begin{align}
 K(\xi, \eta) = d\omega(\xi, \eta) + [\omega(\xi),\omega(\eta)],
 \end{align}
 which is $P$-equivariant and horizontal, i.e.\ $K(\zeta_A,\cdot) = 0, A \in \mathfrak{p}$.  By horizontality, it is determined by the $P$-equivariant {\sl curvature function} $\kappa : \cG \to  \bigwedge^2(\sfrac{\g}{\fp})^{\ast}\tensor \g$, defined by 
 \begin{align}
 \kappa (A, B) = K(\omega^{-1}(A),\omega^{-1}(B)), \quad A,B \in \g.
 \end{align}
 For $(G,P)$ from \S \ref{S:TODE}, and the filtration $\{ \g^i \}$ introduced there, we say that a Cartan connection $\omega$ is {\sl regular} if $\kappa (\g^i, \g^j) \subset \g^{i+j+1}$ for all $i,j$.  Equivalently, $\kappa$ has image in the subspace of $\bigwedge^2(\sfrac{\g}{\fp})^{\ast}\tensor \g$ on which the grading element $\sfZ$ acts with positive eigenvalues ({\sl degrees}).
 
  For normality of $\omega$, we follow the description in \cite[\S 3]{CDT2020}. Let us denote by $C^k(\g,\g) :=\bigwedge^2\g^{\ast} \tensor \g$, and consider the $P$-invariant subspace
 \begin{align}
 C_{\hor}^k(\g,\g):=\{\psi \in C^k(\g,\g):\iota_A\psi = 0, \forall A \in \mathfrak{p}\} \cong \bigwedge{}^{\!\!k} (\sfrac{\g}{\fp})^* \otimes \g.
 \end{align}
 Both of these inherit filtrations from the filtration on $\g$.  Their associated graded can be identified with $C^k(\g_{-},\g)$, i.e.\ the cochain spaces for a complex $C^\bullet(\g_{-},\g)$ with the standard differential $\partial$ for computing Lie algebra cohomology groups $H^k(\g_{-},\g)$.  There is an inner product $\langle \cdot, \cdot \rangle$ on $\g$ whose extension to $C^k(\g,\g)$ is such that the adjoint $\partial^{\ast}$ of the standard differential $\partial_{\g}$ on $C^\bullet(\g,\g)$ (with respect to $\langle \cdot, \cdot \rangle$) restricts to a $P$-equivariant map $\partial^{\ast} :\bigwedge^k(\sfrac{\g}{\mathfrak{p}})^{\ast} \tensor \g \to \bigwedge^{k-1}(\sfrac{\g}{\mathfrak{p}})^{\ast} \tensor \g$.  (See \cite[Lemma 3.2]{CDT2020} for details.)  In terms of this map $\partial^*$, we say that $\omega$ is  {\sl normal} if $\partial^{\ast}\kappa = 0$.  From \cite[Thm.2.2]{CDT2020} (see also \cite{DKM1999,Doubrov2001,DM2014}), we have the following important starting point:
 
 \begin{thm} Fix $(G,P)$ as above.  There is an equivalence of categories between filtered $G_0$-structures and regular, normal Cartan geometries of type $(G,P)$.
 \end{thm}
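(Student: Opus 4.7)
My plan is to prove the equivalence by constructing two functors via Tanaka-type prolongation and showing they are mutually inverse, with the normalization condition $\partial^*\kappa = 0$ providing the rigidity needed for uniqueness.

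In the easy direction, a regular normal Cartan geometry $(\cG\to\cE,\omega)$ of type $(G,P)$ yields a filtered $G_0$-structure as follows. The $P$-invariant filtration $\{\g^i\}$ transports through $\omega$ to a filtration $\{T^i\cG\}$, which descends to a filtration $\{D^i\}$ on $T\cE$. Regularity of $\omega$ guarantees that the induced Levi bracket on $\gr(T_x\cE)$ makes it isomorphic to the fixed NGLA $\g_{-}$ for every $x$, yielding a reduction of $F_{\gr}(\cE)$. The further $G_0$-reduction $\cG_0 \subset F_{\gr}(\cE)$ is obtained as the image of $\cG/\exp(\fp_{+}) \to F_{\gr}(\cE)$, with the splitting $\g_{-1} = \R\sfX \oplus (\R E_n \otimes W)$ translating into the $E\oplus V$ splitting of $D$.

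For the harder direction, I would build $\cG$ by iterated prolongation over $\cG_0\to \cE$. The inductive step uses that $\g$ equals the full algebraic Tanaka prolongation of the pair $(\g_{-},\g_0)$: at stage $i\geq 1$ one forms a principal bundle $\cG_i \to \cG_{i-1}$ modelled on $\exp(\g_i)$, equipped with a partial (soldering) Cartan form extending the previous data. The ambiguity at each stage is a cocycle-type quantity valued in $C^2(\g_{-},\g)$; normalization $\partial^*\kappa = 0$ exploits the Hodge-type decomposition $C^\bullet(\g_{-},\g) = \ker\partial^* \oplus \img\partial$ associated to the inner product of \cite[Lemma 3.2]{CDT2020}, together with $P$-equivariance of $\partial^*$, to pin down a unique extension. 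Regularity holds by construction, and functoriality follows by lifting morphisms through each bundle in the tower.

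The main obstacle is the algebraic verification that the Tanaka prolongation of $(\g_{-},\g_0)$ is exactly $\g$ and no larger, i.e.\ that the positive-degree cohomology $H^1(\g_{-},\g)_i$ vanishes for $i > 0$ in each of the higher-order cases ($m=1,\,n\geq 3$ and $m,n\geq 2$). This requires a careful Kostant-type analysis of $\partial$ and $\partial^*$ on $C^\bullet(\g_{-},\g)$ using the $\fsl_2\times \gl_m$-module structure of $V = \V_n \otimes W$, and must be checked separately in the scalar and vector ranges. Once this vanishing is established, existence, uniqueness, and the inverse-functor property all follow by the standard Tanaka machinery.
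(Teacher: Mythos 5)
The paper does not actually prove this statement: it is imported wholesale from \cite[Thm.~2.2]{CDT2020} (with \cite{DKM1999,Doubrov2001,DM2014} as antecedents), so there is no in-paper argument to measure your proposal against. That said, your outline follows the same route as the cited reference: both directions of the equivalence come from the general prolongation machinery for filtered $G_0$-structures (Morimoto, \v{C}ap \cite{Andreas2017}), with the normalization $\partial^{\ast}\kappa=0$ supplied by the $P$-equivariant codifferential of \cite[Lemma~3.2]{CDT2020}. The easy direction as you describe it is fine, and you have correctly identified the cohomological hypothesis that makes the hard direction work.

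The genuine gap is exactly the step you defer as ``the main obstacle'': you never verify that the Tanaka prolongation of $(\g_{-},\g_0)$ is precisely $\g$, equivalently that $H^1(\g_{-},\g)$ is concentrated in non-positive homogeneities. This is not a routine check to be waved through at the end --- it is the actual mathematical content of the theorem, and it is precisely where the standing hypothesis ``higher order'' ($m=1,\,n\ge 3$ or $m,n\ge 2$) must be used. Indeed the claim is \emph{false} for the excluded cases: for $(n,m)=(2,1)$ the prolongation of the same symbol jumps to $\mathfrak{sp}_4$, and for $(n,m)=(1,m\ge 2)$ to $\fsl_{m+2}$, which is why those ODEs are parabolic geometries with the larger $\fM$ of \eqref{E:max} and are treated separately throughout the paper. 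A proof must therefore carry out the weight computation you allude to --- e.g.\ show that any degree-one cocycle $\g_{-}\to\g$ is a multiple of $\ad_{\sfY}$ modulo coboundaries and that all higher positive-degree cocycles are coboundaries --- before ``the standard Tanaka machinery'' can be invoked; without it, neither existence nor uniqueness of the canonical Cartan connection is established. Two smaller points that a complete argument should also record: the normalization subspace $\ker\partial^{\ast}$ must be a complement to $\img\partial$ as a $P$-submodule (not merely a $G_0$-submodule) in each positive homogeneity, which is what \cite[Lemma~3.2]{CDT2020} provides, and one should note that $\g$ contains no nonzero ideal inside $\fp$ so that the resulting geometry is honestly of type $(G,P)$.
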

  
  \begin{rem}\label{R-G0}
  A  regular, normal Cartan connection {\sl associated to an ODE} \eqref{ODE:sym} satisfies the {\sl strong regularity} condition 
  $\kappa (\g^i, \g^j) \subset \g^{i+j+1} \cap \g^{\min(i,j)-1},\, \forall i,j $  \cite[Rem 2.3]{CDT2020}.
 Consequently, not all filtered $G_0$-structures arise from ODE.  For example, in \cite[\S 3.5]{CDT2020} there is a $G_2$-invariant filtered $G_0$-structure with the same symbol as that of an 11th order scalar ODE, but it is not realizable by any such ODE.
  \end{rem} 
 Since $(\partial^{\ast})^2 = 0$, then for regular, normal Cartan geometries one obtains the ($P$-equivariant) {\sl harmonic curvature} function 
 \begin{align}
 \kappa_H : \cG \to \frac{\ker\partial^{\ast}}{\img \partial^{\ast}},
 \end{align}
 which is valued in the filtrand of positive degree (by regularity).  It is a fundamental fact that $\kappa_H$ completely obstructs local flatness \cite{Andreas2017}, i.e $\kappa_H \equiv 0$ if and only if the geometry is locally equivalent to the flat model, which corresponds to the trivial ODE.  Furthermore,
 
 \begin{lem} \label{lem:compreducibility}
 The $P$-module $\frac{\ker\partial^{\ast}}{\img \partial^{\ast}}$ is completely reducible, i.e. $\g^1$ acts trivially.  
 \end{lem}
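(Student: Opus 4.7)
\emph{Strategy.}  Since $P = G_0 \ltimes \exp(\g^1)$ with $G_0$ reductive and $\exp(\g^1)$ unipotent, a $P$-module is completely reducible precisely when $\g^1$ acts trivially; the stated equivalence in the lemma is just this observation.  The $P$-equivariance of $\partial^{\ast}$ immediately gives that $\ker \partial^{\ast}$ and $\img \partial^{\ast}$ are $P$-submodules of $C^{2}_{\hor}(\g,\g)$, so the quotient is a genuine $P$-module and the reduction is meaningful.  My plan is to realize the quotient as a $G_0$-invariant ``harmonic'' subspace $\mathbb{H}$ via the inner product, identify it as a $G_0$-module with $H^2(\g_-,\g)$ through the associated graded, and then use a $\sfZ$-weight argument to show that $\g^1$ sends $\mathbb{H}$ into $\img\partial^{\ast}$.

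\emph{Key steps.}  (i) Since the inner product $\langle \cdot, \cdot\rangle$ of \cite[Lem.3.2]{CDT2020} is $G_0$-invariant, the orthogonal complement
\[
\mathbb{H} := \ker \partial^{\ast} \cap (\img \partial^{\ast})^{\perp}
\]
is a $G_0$-invariant complement of $\img \partial^{\ast}$ inside $\ker\partial^{\ast}$, yielding a $G_0$-equivariant isomorphism $\frac{\ker\partial^{\ast}}{\img\partial^{\ast}} \cong \mathbb{H}$.  (ii) The filtration on $\g$ induces a filtration on $C^{\bullet}_{\hor}(\g,\g)$ whose associated graded is the standard complex $C^{\bullet}(\g_-,\g)$, and $\gr(\partial^{\ast})$ is the standard Kostant codifferential; so $\mathbb{H}$ is identified as a $G_0$-module with $H^2(\g_-,\g)$, and in particular occupies a specific, bounded range of $\sfZ$-weights.  (iii) Any $Z \in \g^1$ strictly raises $\sfZ$-weight by $1$, and $P$-equivariance gives $Z\cdot\varphi \in \ker\partial^{\ast}$ for $\varphi \in \mathbb{H}$; triviality of the $\g^1$-action on the quotient is then equivalent to $Z\cdot\varphi \in \img\partial^{\ast}$, i.e.\ the $\mathbb{H}_{w+1}$-component of $Z\cdot\varphi$ vanishes for every weight $w$.

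\emph{Main obstacle.}  I expect step (iii) to carry the essential content.  The cleanest approach is Hodge-theoretic: introduce a Laplacian $\Box$ built from $\partial^{\ast}$ and its $\langle \cdot, \cdot\rangle$-adjoint inside $C^{\bullet}_{\hor}(\g,\g)$, show $\mathbb{H} = \ker\Box$, and argue $\Box$ is $P$-equivariant, so that $\g^1$ preserves $\mathbb{H}$ while also strictly raising $\sfZ$-weights --- forcing the induced endomorphism of each weight summand to vanish.  Unlike the parabolic Kostant setting, the ``adjoint'' of $\partial^{\ast}$ on $C^{\bullet}_{\hor}(\g,\g)$ is not automatically the Chevalley--Eilenberg differential, so both the identification $\mathbb{H} = \ker\Box$ and the $P$-equivariance of $\Box$ need separate verification in this non-parabolic setting.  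An alternative, exploiting that $\g^1 = \R\sfY$ is one-dimensional and that the $G_0$-irreducible decomposition of $H^2(\g_-,\g)$ is explicitly known (Tables \ref{tab:scalar}--\ref{tab:vector}, following \cite{Doubrov2001,Medvedev2010,DM2014}), is a case-by-case check that $\sfY\cdot\mathbb{H} \subset \img\partial^{\ast}$ by comparing the $\sfZ$-weight supports of the $G_0$-irreducible summands.  Either route requires adapting the parabolic argument to the present situation, and this is the main difficulty I foresee.
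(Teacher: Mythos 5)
The paper does not prove this lemma at all: its ``proof'' is a one-line citation to \cite[Cor.\ 3.8]{CDT2020}, so there is no internal argument to compare against. Your proposal is therefore attempting something the authors deliberately outsourced, and as written it is a strategy outline rather than a proof: steps (i) and (ii) are routine and correct (the $G_0$-invariant harmonic complement $\mathbb{H}$ and its identification with $H^2(\g_-,\g)$ via the associated graded are exactly the content of \eqref{HD}), but step (iii), which you yourself flag as carrying ``the essential content,'' is left open.

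The concrete gap is that the weight-raising heuristic you propose for step (iii) cannot work on its own. You argue that $\sfY \in \g^1$ raises $\sfZ$-weight by $1$ and preserves $\mathbb{H}$ (granting $P$-equivariance of a suitable $\square$), ``forcing the induced endomorphism of each weight summand to vanish.'' But an endomorphism that strictly raises weight is only forced to be \emph{nilpotent}, not zero; it vanishes on the weight-$w$ summand only if $\mathbb{H}$ has no weight-$(w+1)$ component. In the present setting $H^2(\g_-,\g)$ — indeed already the effective part $\bbE$ — contains $G_0$-irreps at consecutive $\sfZ$-degrees: the Wilczynski modules $\bbW_r$ have bi-grade $(r,0)$ for all $3 \le r \le n+1$, while $\sfY$ has bi-grade $(1,0)$, so $\sfY\cdot\bbW_r$ lands precisely in the bi-grade of $\bbW_{r+1}$ (and similarly $\bbA_2,\bbA_3,\bbA_4$ sit at consecutive degrees along the $\sfZ_2$-degree-one line). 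Even refining to the bi-grading and the full $G_0$-module structure does not rule out a nonzero component of $\sfY\cdot\varphi$ in $\mathbb{H}$. So triviality of the $\g^1$-action is a genuinely nontrivial structural fact about $\partial^*$ and the harmonic subspace, not a consequence of weight bookkeeping; your alternative ``compare weight supports of the irreducible summands'' route fails for the same reason. To close the argument you would need the actual mechanism of \cite[Cor.\ 3.8]{CDT2020} (or an explicit identity expressing $\sfY\cdot\varphi$ modulo $\img\partial^*$ for $\varphi$ harmonic), which is exactly the piece your proposal defers.
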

 
 \begin{proof}
 See \cite[Corollary 3.8]{CDT2020}.
 \end{proof}

 The above complete reducibility property will be important in subsequent sections.  Consequently, only the $G_0$-action on $\frac{\ker\partial^{\ast}}{\img \partial^{\ast}}$ is relevant. 
 Identifying $\bigwedge^2 (\sfrac{\g}{\fp})^* \otimes \g \cong \bigwedge^2 \g_-^* \otimes \g$ as $G_0$-modules, and defining the Laplacian operator $\square:=\partial \circ \partial^{\ast} + \partial^{\ast}\circ\partial$ on $\bigwedge^2 \g_-^* \otimes \g$, we have a Hodge decomposition and the following $G_0$ isomorphisms:
 \begin{align}\label{HD}
 \bigwedge{}^{\!\!2}{\g_{-}^{\ast}\tensor \g} \cong
 \rlap{$\overbrace{\phantom{\,\img \partial^{\ast} \oplus \ker\square}}^{\ker \partial^{\ast}}$}\img \partial^{\ast} \oplus  
 \underbrace{\ker \square \oplus \img \partial}_{\ker \partial},\quad \ker \square \cong \frac{\ker \partial^{\ast}}{\img \partial^{\ast}}\cong \frac{\ker \partial}{\img \partial}=:H^2(\g_{-},\g).
 \end{align}
 Regularity of $\omega$ and complete reducibility imply that the codomain of $\kappa_H$ can be identified with the subspace $H^2_{+}(\g_{-},\g) \subset H^2(\fg_-,\fg)$ on which $\sfZ$ acts with positive eigenvalues.

 Not all filtered $G_0$-structures are realizable by ODE, so some of $H^2_+(\g_-,\g)$ is extraneous for ODE.
 
 \begin{defn} \label{D-EP}
 Let $\bbE \subset H^2_+(\g_-,\g)$ denote the {\sl effective part}, i.e.\ the minimal $G_0$-submodule in which $\kappa_H$ is valued, for any regular, normal Cartan geometry of type $(G,P)$ associated to an ODE (for fixed $n,m$).
 \end{defn}
  This important submodule has already been computed in the literature \cite{Doubrov2000, Doubrov2001, Medvedev2010, DM2014}.  All irreducible components are summarized in Tables \ref{tab:scalar} and \ref{tab:vector}.

 \subsection{ODE symmetries viewed Cartan-geometrically}

 Given a Cartan geometry $(\cG \to M, \omega)$ of type $(G,P)$, an {\sl (infinitesimal) symmetry} is a $P$-invariant vector field on $\cG$ that preserves $\omega$ under Lie differentiation.  The collection of all such symmetries forms a Lie algebra, which we denote by
 \begin{align}
 \finf(\cG, \omega) := \left\{\xi \in \Gamma(\cG)^{P} :\mathcal{L}_{\xi}\omega = 0 \right\}.
 \end{align}
 
 \begin{prop} \label{prop:filtration}
	Let $(\cG \to M, \omega)$ be a Cartan geometry of type $(G, P)$ and fix $u \in \cG$ arbitrary.  Then:
	\begin{itemize}
 	\item [(i)] The map $\xi \mapsto \omega(\xi_u)$ is a linear injection from $\finf\,(\cG,\omega)$ into $\g$.  Let $\ff(u)$ denote the image subspace.
 	\item [(ii)] Equipping $\ff(u)$ with the inherited filtration $\ff(u)^k := \ff(u) \cap \g^k$ and bracket
 \begin{align}
 [X, Y]_{\ff(u)}:= [X, Y] - \kappa(u)(X, Y), \quad \forall X, Y \in \ff(u),
 \end{align}
 we have that $(\ff(u), [\cdot, \cdot]_{\ff(u)})$ is a filtered Lie algebra isomorphic to $\finf(\cG,\omega)$.
 	\item [(iii)]  The associated graded Lie algebra $\fs(u) := \gr(\ff(u))$ is a graded Lie subalgebra of $\g$. 
 	\item[(iv)] $\fs_0(u) \subseteq \mathfrak{ann}(\kappa_H(u)) \subseteq \g_0$.
	\end{itemize}	
 \end{prop}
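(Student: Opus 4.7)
The plan is to treat the four claims in order; the bracket identity behind (ii) is the engine for (iii), while (iv) reduces to a careful interplay of $P$-equivariance with the symmetry condition.

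For (i), linearity is immediate from that of $\omega_u$. Pointwise, axiom (i) of a Cartan connection gives $\omega_u$ as a linear isomorphism, so $\omega(\xi_u)=0$ forces $\xi_u=0$. Globally, I would invoke the standard rigidity principle for Cartan geometries: the $P$-equivariant function $f_\xi := \omega(\xi) : \cG \to \g$ attached to a symmetry $\xi$ obeys a first order ODE along any curve (inherited from $\cL_\xi\omega = 0$ together with the trivialization of $T\cG$ by $\omega$), so vanishing at a single point forces $f_\xi \equiv 0$ on the connected component, and hence $\xi \equiv 0$.

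For (ii), the key computation is an identity for $\omega([\xi,\eta])$ when $\xi, \eta \in \finf(\cG,\omega)$. The condition $(\cL_\xi\omega)(\eta) = 0$ yields $\xi\cdot\omega(\eta) = \omega([\xi,\eta])$ and, symmetrically, $\eta\cdot\omega(\xi) = -\omega([\xi,\eta])$; substituting into the Cartan formula for $d\omega(\xi,\eta)$ and then into the structure equation $K(\xi,\eta) = d\omega(\xi,\eta) + [\omega(\xi),\omega(\eta)]$ produces, after evaluation at $u$, the desired identity for $\omega([\xi,\eta]_u)$ in terms of $[X,Y]$ and $\kappa(u)(X,Y)$, with $X := \omega(\xi_u)$ and $Y := \omega(\eta_u)$. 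Transporting through $\phi:\xi\mapsto\omega(\xi_u)$ then forces the stated bracket on $\ff(u)$; Jacobi follows from that of $\finf(\cG,\omega)$ (which in turn is immediate since $\cL_{[\xi,\eta]}\omega = [\cL_\xi,\cL_\eta]\omega = 0$). That $\ff(u)^k := \ff(u)\cap\g^k$ defines a filtration compatible with $[\cdot,\cdot]_{\ff(u)}$ is immediate from $[\g^i,\g^j]\subset\g^{i+j}$ together with the regularity bound $\kappa(u)(\g^i,\g^j)\subset\g^{i+j+1}$.

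Part (iii) is then immediate: regularity forces the curvature correction to strictly lower filtration degree, so it vanishes on the associated graded, and the induced bracket on $\fs(u) = \gr(\ff(u))$ is the restriction of the ambient graded bracket of $\gr(\g)\cong\g$, exhibiting $\fs(u)$ as a graded subalgebra of $\g$. For (iv), the inclusion $\fs_0(u)\subset\g_0$ is automatic from $\fs_0(u) = \ff(u)^0/\ff(u)^1 \hookrightarrow \g^0/\g^1 = \g_0$. For the annihilation statement, I would pick $\xi\in\finf(\cG,\omega)$ with $X := \omega(\xi_u)\in\fp$; preservation of $\omega$ by $\xi$ implies preservation of $K$, $\kappa$, and $\kappa_H$, so $\xi_u\cdot\kappa_H = 0$. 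Since $\omega(\zeta_X(u)) = X = \omega(\xi_u)$, axiom (i) forces $\xi_u = \zeta_X(u)$ in $T_u\cG$; hence $0 = \xi_u\cdot\kappa_H = \zeta_X(u)\cdot\kappa_H$, which by \eqref{E:Pequiv} equals $-X\cdot\kappa_H(u)$. The main obstacle is the last step: the vanishing $X\cdot\kappa_H(u) = 0$ takes place at the level of $X\in\fp$, but the claim is about the class of $X$ in $\g_0 = \g^0/\g^1$. This conversion is exactly what Lemma \ref{lem:compreducibility} provides — the $\fp_+$-action on the codomain of $\kappa_H$ is trivial, so the $\fp$-action factors through $\g_0$, and the induced action of $\bar X\in\fs_0(u)\subset\g_0$ on $\kappa_H(u)$ is zero.
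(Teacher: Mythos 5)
Your proposal is correct and follows essentially the same route as the paper: parts (i)--(iii) are precisely the standard arguments that the paper delegates to \cite[Thm.4]{CN2009} (rigidity of a symmetry via the first-order ODE satisfied by $\omega(\xi)$, the curvature-twisted bracket identity from $\cL_\xi\omega=0$ and the structure equation, and regularity killing the $\kappa$-correction on the associated graded). Your proof of (iv) --- identifying $\xi_u$ with $\zeta_X(u)$ to get $X\cdot\kappa_H(u)=0$ for $X\in\fp$ and then descending to $\fs_0(u)\subset\g_0$ via the complete reducibility of Lemma \ref{lem:compreducibility} --- coincides with the paper's argument.
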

 \begin{proof}
 The statements (i)--(iii) were proved in \cite[Thm.4]{CN2009} for bracket-generating distributions that lead to parabolic geometries of type $(G,P)$.  Although $(G,P)$ there refers to  the parabolic setting, the same proof works for our $(G,P)$ considered here.  For (iv), let $A \in \fp$ with $A \in \ff^0(u)$, and let $\eta$ be a symmetry with $\omega(\eta_u) = A$.  Use \eqref{E:Pequiv} with $f = \kappa_H$ to obtain $A \cdot \kappa_H(u) = 0$.  Since $\frac{\ker\partial^{\ast}}{\img \partial^{\ast}}$ is completely reducible, this statement only depends on $A\!\!\mod \ff^1 \in \fs_0(u)$, so (iv) follows.
 \end{proof}

 Using Cartan-geometric methods, we have:

 \begin{cor} \label{cor:maxmodel}
 Let $(n,m) \neq (1,1)$.  Up to (local) contact transformations, the trivial ODE $\bu^{(n+1)} = 0$ of order $n+1 \geq 2$ with $m \geq 1$ dependent variables is uniquely maximally symmetric among \eqref{ODE:sym}.
 
 \end{cor}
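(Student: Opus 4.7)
The plan is to exploit the equivalence of categories between ODEs and regular, normal Cartan geometries of type $(G,P)$, and to combine Proposition~\ref{prop:filtration} with the fact that $\kappa_H$ is a complete obstruction to local flatness. Existence (that $\bu^{(n+1)}=0$ attains $\fM$) is already recorded in \eqref{E:max} and \eqref{E:maxsym}, so the real content is uniqueness up to contact equivalence.

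First I would pass to the associated regular, normal Cartan geometry $(\cG \to \cE, \omega)$ of type $(G,P)$. Proposition~\ref{prop:filtration}(i) identifies $\finf(\cG,\omega)$ with a subspace $\ff(u) \subseteq \g$ at any fixed $u \in \cG$, with $\dim \ff(u) \leq \dim \g = \fM$. Under the hypothesis that the ODE has contact symmetry dimension $\fM$, this forces $\ff(u) = \g$, and passing to the associated graded via Proposition~\ref{prop:filtration}(iii) yields $\fs(u) = \g$. In particular $\fs_0(u) = \g_0$.

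The decisive step is then Proposition~\ref{prop:filtration}(iv), which implies $\g_0 \subseteq \mathfrak{ann}(\kappa_H(u))$. But the grading element $\sfZ \in \g_0$ from \eqref{E:gr-elt} acts on $H^2_+(\g_-,\g)$ with strictly positive eigenvalues by construction, and hence has trivial kernel there. Complete reducibility (Lemma~\ref{lem:compreducibility}) lets this $\g_0$-action statement be applied directly to values of $\kappa_H$, so $\kappa_H(u) = 0$. Since $u$ was arbitrary we conclude $\kappa_H \equiv 0$, and the flatness criterion noted in \S2.1.3 then identifies $(\cG,\omega)$ locally with the flat model $(G \to \sfrac{G}{P}, \omega_G)$. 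Under the equivalence of categories this translates to a local contact equivalence with the trivial ODE $\bu^{(n+1)} = 0$, giving uniqueness.

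I do not anticipate any serious obstacle: all the heavy machinery -- the filtered Lie algebra structure of Proposition~\ref{prop:filtration}, the complete reducibility of the target of $\kappa_H$, and the flatness criterion -- has already been set up in the preceding subsections. The only point to underline is that regularity of $\omega$ is exactly what guarantees the positive-degree decomposition that makes $\sfZ$ act injectively on the image of $\kappa_H$; without regularity the short final step would fail.
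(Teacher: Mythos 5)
Your proposal is correct and follows essentially the same route as the paper: identify $\finf(\cG,\omega)$ with $\ff(u)\subseteq\g$, pass to $\fs(u)=\g$, use Proposition~\ref{prop:filtration}(iv) to get $\g_0\subseteq\mathfrak{ann}(\kappa_H(u))$, and let the grading element $\sfZ$ kill $\kappa_H(u)\in H^2_+(\g_-,\g)$. The only point you omit is that the $(G,P)$ machinery of \S\ref{S:TODE} is set up only for the higher order cases, so the scalar 3rd order and vector 2nd order cases (which are parabolic) must be handled by citing the parabolic uniqueness statement, as the paper does via \cite[Prop.2.3.2]{KT2017}.
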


 \begin{proof} The scalar 3rd order ($n=2, m=1$) and vector 2nd order ($n=1, m\geq 2$) cases correspond to parabolic geometries -- see \cite[Prop.2.3.2]{KT2017} for a uniqueness statement.  The proof for higher order ODE cases is analogous and we give this here.  Given an ODE \eqref{ODE:sym}, let $(\cG \to M, \omega)$ be the corresponding regular, normal Cartan geometry of type $(G,P)$.  Fix any $u \in \cG$.  By Proposition \ref{prop:filtration} (iii), $\fs(u) \subset \fg$, so $\dim \inf(\cG,\omega) = \dm \fs(u) \leq \dm \g$.  The trivial ODE in particular has symmetry dimension $\fM = \dim \g$, so this is indeed maximal.  Now supposing $\dim \inf(\cG,\omega) = \dm \g$, we must have $\fs(u) = \g$, so $\g_0 = \fs_0(u) = \mathfrak{ann}(\kappa_H(u))$ follows from Proposition \ref{prop:filtration} (iv).  In particular, the grading element satisfies $\sfZ \in \fs_0(u)$.  Since $\kappa_H(u) \in H^2_{+}(\g_{-},\g)$, then $\kappa_H(u) = 0$, so $\kappa_H \equiv 0$ and the geometry is flat.  Thus, the ODE is locally equivalent to the trivial one.
 \end{proof}

 We note that the results for the scalar case are due to Lie \cite{Lie1893}, while Fels \cite{Fels1993} established uniqueness for the case of second and third order systems using Cartan's method of equivalence.
 
 \subsection{An algebraic bound on submaximal symmetry dimensions}
 \label{S:upper}
 
 Fix $(G,P)$ as above.  We define the submaximal symmetry dimension $\mathfrak{S}$ by:
 \begin{align}\label{D-submax}
 \begin{split}
 \mathfrak{S} &:= \max \left\{\dm \finf\,(\cG,\omega) : (\cG \to M, \omega) \, \mbox{regular, normal of type } (G,P) \right.\\
 & \hspace{2in}\left. \mbox{ associated to an ODE, with }\, \kappa_H \not\equiv 0 \right\}.
 \end{split}
 \end{align}
 
 Following \cite{KT2017}, we define:
 
 \begin{defn} \label{Tanakaalgebra}
 Let $\g$  be a graded Lie algebra with $\g_{-}$ generated by $\g_{-1}$. For $\fa_0 \subset \g_0$, the \emph{Tanaka prolongation algebra} is the graded subalgebra $\fa :=\text{pr}(\g_{-},\fa_0)$ of $\g$ with $\fa_{-} :=\g_{-}$ and $\fa_k$ defined iteratively for $k > 0$ by $\fa_k := \{X \in \g_k : [X, \g_{-1}] \subset \fa_{k-1}\}$.  Given $\phi$ in some $\g_0$-module, let $\fann(\phi) \subset \g_0$ be its annihilator and define $\fa^{\phi} := \pr (\g_{-}, \mathfrak{ann}(\phi))$.
 \end{defn}

 In terms of the effective part $\bbE \subset H^2_+(\g_-,\g)$, we define
 \begin{align} \label{def:U}
 \mathfrak{U} &:=\max\left\{\dm \fa^{\phi}:\,0\ne \phi \in \bbE \right\}.
 \end{align}
 Clearly $\mathfrak{U} < \dm \g$.  (Otherwise $\fa^{\phi}=\g$ for some $0 \neq \phi \in \bbE$, and so $\sfZ \in  \mathfrak{ann}(\phi)$.  But necessarily $\sfZ$ acts non-trivially since $\phi \in H^2_{+}(\g_{-},\g)$,  which is a contradiction.)  We will show that $\mathfrak{S} \le \mathfrak{U}$.

 \begin{lem} \label{th:tractorform}
	Let $(\cG \to M,\omega)$ be a regular, normal Cartan geometry of type $(G,P)$.  Let $u \in \cG$ be arbitrary. Let $\xi \in \finf(\cG,\omega)$ with $\omega(\xi_u) \in \g^{1} \subset \mathfrak{p}$  and $\eta \in \Gamma(T^{-1}\cG)^P$. Then:
	\begin{align} \label{fproperty}
	[\omega(\xi_u),\, \omega(\eta_u)] \cdot \kappa_H(u) = 0.
	\end{align}
 \end{lem}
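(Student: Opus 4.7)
The plan is to set $A := \omega(\xi_u) \in \g^1$ and $B := \omega(\eta_u) \in \g^{-1}$, note that $[A,B] \in \g^0 = \fp$, and exhibit $[A,B]\cdot\kappa_H(u)$ as the value at $u$ of $[\xi,\eta]\cdot\kappa_H$ computed in two different ways that together force it to vanish.

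First, I would compute $\omega_u([\xi,\eta]_u)$ via the Cartan structure equation
\[
d\omega(\xi,\eta) = \xi\cdot\omega(\eta) - \eta\cdot\omega(\xi) - \omega([\xi,\eta]),
\]
combined with the symmetry condition $\cL_\xi \omega = 0$ (which gives $d\omega(\xi,\eta) = -\eta\cdot\omega(\xi)$), to obtain $\omega_u([\xi,\eta]_u) = (\xi\cdot\omega(\eta))(u)$.  Applying \eqref{E:Pequiv} to the $P$-equivariant $\g$-valued function $\omega(\eta)$ (permissible since $A \in \fp$) then yields $\omega_u([\xi,\eta]_u) = -[A,B] \in \fp$.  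Since $[\xi,\eta]$ is $P$-invariant (the bracket of $P$-invariant fields is $P$-invariant), a second application of \eqref{E:Pequiv}, now to the $P$-equivariant function $\kappa_H$, gives
\[
([\xi,\eta]\cdot\kappa_H)(u) = [A,B]\cdot\kappa_H(u).
\]

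Second, $\cL_\xi \omega = 0$ implies that the flow of $\xi$ preserves the curvature function, hence $\xi\cdot\kappa_H \equiv 0$ on $\cG$.  This reduces the commutator identity to $[\xi,\eta]\cdot\kappa_H = \xi\cdot(\eta\cdot\kappa_H)$.  Because $\eta$ is $P$-invariant, its flow commutes with right $P$-translation, so combined with the $P$-equivariance of $\kappa_H$ one sees that $\eta\cdot\kappa_H$ is again $P$-equivariant.  A third application of \eqref{E:Pequiv} at $u$ (still using $A \in \fp$) then gives $(\xi\cdot(\eta\cdot\kappa_H))(u) = -A\cdot(\eta\cdot\kappa_H)(u)$, which vanishes because $A \in \g^1$ acts trivially on $\ker\partial^{\ast}/\img\partial^{\ast}$ by Lemma \ref{lem:compreducibility}.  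Combining the two computations yields $[A,B]\cdot\kappa_H(u) = 0$.

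I expect the main subtlety to be bookkeeping around \eqref{E:Pequiv}: that identity only applies when the $\omega$-image lies in $\fp$ and the function in question is $P$-equivariant, so each of the three applications must be verified individually (in particular, the $P$-equivariance of $\eta\cdot\kappa_H$).  Beyond these checks, the proof is essentially the Cartan structure equation plus the symmetry condition on $\xi$ plus complete reducibility of the harmonic curvature module.
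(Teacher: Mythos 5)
Your proposal is correct and follows essentially the same route as the paper's proof: the identity $\omega([\xi,\eta])(u) = -[A,B]$ from $\cL_\xi\omega=0$ together with \eqref{E:Pequiv}, the prolonged relation $[\xi,\eta]\cdot\kappa_H = \xi\cdot(\eta\cdot\kappa_H)$ coming from $\xi\cdot\kappa_H=0$, the $P$-equivariance of $\eta\cdot\kappa_H$, and the vanishing of the $\g^1$-action via Lemma \ref{lem:compreducibility}. The three applications of \eqref{E:Pequiv} you flag as the main bookkeeping points are exactly the ones the paper carries out.
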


 \begin{proof}
 Fix $u \in \cG$ as above with $A := \omega(\xi_u) \in \g^1$ and $B := \omega(\eta_u) \in \fg^{-1}$.  Since $\xi$ is a symmetry, then $0 = (\mathcal{L}_{\xi} \omega) (\eta) = d\omega(\xi, \eta) + \eta \cdot \omega (\xi) = \xi \cdot \omega(\eta) -\omega([\xi, \,\eta])$.  Evaluation at $u$ now yields 
 \begin{align} \label{E:xieta}
 \omega([\xi, \eta])(u) = (\xi \cdot \omega(\eta))(u) = -[A, B] \in \fp,
 \end{align}
 using $P$-equivariancy of $\omega(\eta)$ and \eqref{E:Pequiv}.

Since $\xi$ is a symmetry, then $\xi \cdot \kappa = 0$ and $\xi \cdot \kappa_H = 0$.  We get the prolonged equation
 \begin{align}\label{cond0}
 0 &= \eta \cdot (\xi \cdot \kappa_H) = \xi \cdot (\eta \cdot \kappa_H)  + [\eta,\,\xi]\cdot \kappa_H.
 \end{align}
 Now evaluate at $u$:
 \begin{itemize}
 \item Since $\eta$ is $P$-invariant and $\kappa_H$ is $P$-equivariant, then $\eta \cdot \kappa_H : \cG \to \frac{\ker \partial^*}{\img \partial^*}$ is $P$-equivariant.  Thus, $(\xi \cdot (\eta \cdot \kappa_H))(u) = -A \cdot (\eta  \cdot \kappa_H)(u) =0$ using \eqref{E:Pequiv} and Lemma \ref{lem:compreducibility} (since $A \in \fg^1$).
 \item Since $[\xi,\eta]$ is $P$-invariant with $\omega([\xi,\eta])(u) \in \fp$, then 
 \begin{align}
 0 \stackrel{\eqref{cond0}}{=} ([\eta,\,\xi]\cdot \kappa_H)(u) \stackrel{\eqref{E:Pequiv}}{=} \omega([\xi,\eta])(u) \cdot \kappa_H(u) \stackrel{\eqref{E:xieta}}{=} -[A,B] \cdot \kappa_H(u).
 \end{align}
 \end{itemize}
 \end{proof}

\begin{thm} \label{th:uub}
	Let $(\pi : \cG \to M, \omega)$ be a regular, normal Cartan geometry of type $(G, P)$ associated to an ODE.  For any $u \in \cG$, we have $\fs(u) \subseteq \fa^{\kappa_H(u)}$.  Moreover, $\mathfrak{S} \le \mathfrak{U} < \dm \g$.
\end{thm}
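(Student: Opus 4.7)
The plan is to establish the grade-by-grade containment $\fs(u) \subseteq \fa^{\kappa_H(u)}$, then pass to dimensions and maximize. Recall from Proposition \ref{prop:filtration}(iii) that $\fs(u) = \gr(\ff(u))$ is a graded Lie subalgebra of $\g$, so it suffices to check $\fs_k(u) \subseteq \fa^{\kappa_H(u)}_k$ in each degree $k$. In negative degrees this is immediate since $\fa^{\kappa_H(u)}_k = \g_k$; and for $k = 0$ this is exactly Proposition \ref{prop:filtration}(iv).

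The only interesting degree is $k = 1$, and here is where Lemma \ref{th:tractorform} enters. Since $\g_k = 0$ for all $k \geq 2$, one has $\fs_1(u) = \ff(u) \cap \g^1$, so each $X \in \fs_1(u)$ is realized as $\omega(\xi_u)$ for a genuine symmetry $\xi \in \finf(\cG,\omega)$. To verify that $X$ belongs to $\fa^{\kappa_H(u)}_1 = \{Y \in \g_1 : [Y,\g_{-1}] \subseteq \fann(\kappa_H(u))\}$, I would pick an arbitrary $B \in \g_{-1}$ and construct a $P$-invariant section $\eta \in \Gamma(T^{-1}\cG)^P$ with $\omega(\eta_u) = B$, which is possible because $\g^{-1}$ is $P$-invariant: one first extends $B$ along the fiber through $u$ by the equivariance rule $\omega(\eta)(u\cdot g) = \Ad_{g^{-1}} B$, then extends across a tubular neighborhood. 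Lemma \ref{th:tractorform} then gives $[X,B]\cdot \kappa_H(u) = 0$, and since $[X,B] \in \g_0$ this means $[X,B] \in \fann(\kappa_H(u)) \cap \g_0 = \fa^{\kappa_H(u)}_0$, so $X \in \fa^{\kappa_H(u)}_1$ as required. Degrees $k \geq 2$ contribute nothing because $\g_k = 0$ there.

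Having established $\fs(u) \subseteq \fa^{\kappa_H(u)}$, Proposition \ref{prop:filtration}(ii) yields $\dim \finf(\cG,\omega) = \dim \ff(u) = \dim \fs(u) \leq \dim \fa^{\kappa_H(u)}$. If $\kappa_H \not\equiv 0$, then choosing $u$ with $\kappa_H(u) \neq 0$ and using $\kappa_H(u) \in \bbE$ (Definition \ref{D-EP}) bounds the right-hand side by $\fU$. Maximizing over all ODE-arising regular, normal geometries of type $(G,P)$ with nonzero harmonic curvature gives $\fS \leq \fU$. The strict inequality $\fU < \dim \g$ is exactly the short parenthetical argument already given just after \eqref{def:U}: equality would force $\sfZ \in \fann(\phi)$ for some $0 \neq \phi \in H^2_+(\g_-,\g)$, contradicting that $\sfZ$ acts there with positive eigenvalues.

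The main technical content is concentrated in the $k = 1$ step, but most of the work has already been packaged into Lemma \ref{th:tractorform}, which combines the symmetry equation $\cL_\xi \omega = 0$ with complete reducibility of $\ker\partial^\ast/\img\partial^\ast$ (Lemma \ref{lem:compreducibility}). The only remaining care is the $P$-invariant extension of $B \in \g_{-1}$, which is routine, so I do not anticipate any serious obstacle beyond correctly lining up the filtered/graded bookkeeping.
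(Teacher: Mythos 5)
Your proposal is correct and follows essentially the same route as the paper's proof: degree $0$ is Proposition \ref{prop:filtration}(iv), degree $1$ is handled by applying Lemma \ref{th:tractorform} with an arbitrary $B \in \g_{-1}$ and a $P$-invariant extension $\eta$, and the dimension count plus the parenthetical after \eqref{def:U} give $\fS \le \fU < \dim\g$. The only cosmetic difference is that the paper notes $\fs_1(u) = \R\sfY$ when nonzero rather than taking a general $X \in \fs_1(u)$; this changes nothing.
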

 \begin{proof}
 Fix any $u \in \cG$.  We have $\fs_0(u) \subseteq \mathfrak{ann}(\kappa_H(u))$ from Proposition \ref{prop:filtration}(iv), so for the first claim it suffices to prove that $\fs_1(u) \subseteq \fa_1^{\kappa_H(u)}$.  Suppose $\fs_1(u) \neq 0$, then we must have $\fs_1(u) = \R \sfY$.  Pick any $B \in \fg_{-1}$.  Let $\xi  \in \finf(\cG,\omega)$ and $\eta \in \Gamma(T^{-1}\cG)^P$ with $\omega(\xi_u) = \sfY$ and $\omega(\eta_u) = B$. Then \eqref{fproperty} with $A := \sfY$ implies that $[\sfY, B] \cdot\kappa_H(u) = 0$, hence $\sfY \in \fa_1^{\kappa_H(u)}$ and the first claim follows. We deduce that $\dm\finf(\cG,\omega) = \dm \fs(u) \le \dm \fa^{\kappa_H(u)} \leq \fU$, since $\kappa_H$ is valued in the effective part $\bbE$.  We conclude that $\mathfrak{S} \le \mathfrak{U} < \dm \g$.
\end{proof}

 \begin{rem}\label{R-BT} In the parabolic setting, the analogous statement $\fs(u) \subseteq \fa^{\kappa_H(u)}$ was proved in  \cite[\S 3]{KT2017} on an open dense set of so-called {\sl regular points} (using a Frobenius integrability argument).  This was strengthened to all points in \cite{KT2018} using the fundamental derivative and calculus on the adjoint tractor bundle.  Our proof in this section is adapted from the latter, but can be formulated and proven more simply since the positive part $\g_+ = \g_1$ consists of only a single grading level (with dimension one).
 \end{rem}

 Let $\cO \subset \bbE$ be a $G_0$-invariant  subset.  We define $\fS_\cO$ analogously to $\fS$ from \eqref{D-submax}, but with the additional constraint that $\kappa_H$ is valued in $\cO$.  We also set
 $\fU_\cO := \max\{ \dim \fa^\phi : 0 \neq \phi \in \cO \}$.  The same argument as in Theorem \ref{th:uub} allows us to conclude:
 \begin{align}
 \fS_\cO \leq \fU_\cO.
 \end{align}
 Of particular interest to us will be the case where $\cO \subset \bbE$ is a $G_0$-irrep $\bbU$, so that $\fS_\bbU \leq \fU_\bbU$.
 
 Suppose that $\bbE = \bigoplus_i \bbU_i$ is the decomposition into $G_0$-irreps $\bbU_i$, which exists since $G_0$ is reductive.  From the definition of $\fU$ and $\fU_{\bbU_i}$, we remark that the following equality is immediate:
 \begin{align} \label{E:U-decomp}
 \fU = \max_i \,\fU_{\bbU_i}.
 \end{align}
 A priori, the corresponding statement $\fS = \max_i \fS_{\bbU_i}$ may not hold, in particular when $\fS_{\bbU_i} \neq \fU_{\bbU_i}$.  Furthermore, submaximally symmetric models may exist with $\kappa_H$ not concentrated along a single irreducible component.
 
 \section{Computation of upper bounds} 
 \label{sect:main}

 In this entirely algebraic section, we compute $\fU$ and $\fU_\bbU$ for each $\g_0$-irrep $\bbU \subset \bbE \subset H^2_+(\g_-,\g)$. In view of Theorem \ref{th:uub}, these provide upper bounds on the respective submaximal symmetry dimensions $\fS$ and $\fS_\bbU$.
 
 \subsection{Bi-gradings} 
 \label{S:BG}

 In \eqref{E:grading}, we introduced a $\g_0$-invariant splitting on $\g_{-1}$.  Such splittings similarly arise for parabolic geometries (with respect to non-maximal parabolic subgroups).  Analogously as in that setting \cite{KT2017}, we refine the grading to a {\sl bi-grading}.  Define $\sfZ_1, \sfZ_2 \in \mathfrak{z}(\fg_0)$ with $\sfZ = \sfZ_1 + \sfZ_2$ (see \eqref{E:gr-elt}) by
 \begin{align} \label{bi-grading}
 \sfZ_1 = -\frac{1}{2}(\sfH + n\id_m), \quad \sfZ_2 = -\id_m.
 \end{align}
 We refer to the ordered pair $(\sfZ_1,\sfZ_2)$ as the {\sl bi-grading element}, and then the joint eigenspaces $\g_{a,b} := \{ x \in \g : [\sfZ_1,x] = ax,\, [\sfZ_2,x] = bx \}$ 
 define the bi-grading $\g = \bigoplus_{(a,b) \in \Z^2} \g_{a,b}$.  Note that $\g_0 = \g_{0,0}$ and $\g_{-1} = \g_{-1,0} \oplus \g_{0,-1}$, and we visualize the bi-grading as in Figure \ref{F:bi-grading}.
 
 \begin{figure}[h]
 \[
 \begin{tikzpicture}[scale=2,baseline=-3pt]
 \draw (0.05,-1) ++(0,0) node {$\cdots$}; 
 \draw[thick,red] (0,0) -- (1,0);
 \draw[thick] (0,0) -- (-1,0);
 \draw[thick] (0,0) -- (-1.5,-1);
 \draw[thick] (0,0) -- (-0.5,-1);
 \draw[thick] (0,0) -- (0.5,-1);
 \draw[thick] (0,0) -- (1.5,-1);
 \filldraw[red] (1,0) circle (0.05);
 \filldraw[red] (0,0) circle (0.05);
 \filldraw[black] (1.5,-1) circle (0.05);
 \filldraw[black] (0.5,-1) circle (0.05);
 \filldraw[black] (-0.5,-1) circle (0.05);
 \filldraw[black] (-1.5,-1) circle (0.05);
 \filldraw[black] (-1,0) circle (0.05);
 \draw (-1,0) ++(0,0.2) node {\tiny $(-1,0)$};
 \draw (0,0) ++(0,0.2) node {\tiny $(0,0)$};
 \draw (1,0) ++(0,0.2) node {\tiny $(1,0)$};
 \draw (1.5,-1) ++(0,-0.2) node {\tiny $(0,-1)$};
 \draw (0.5,-1) ++(0,-0.2) node {\tiny $(-1,-1)$};
 \draw (-0.5,-1) ++(0,-0.2) node {\tiny $(-n+1,-1)$};
 \draw (-1.5,-1) ++(0,-0.2) node {\tiny $(-n,-1)$};
 \draw [draw=black,dashed] (-0.1,-0.1) rectangle (1.1,0.1);
 \end{tikzpicture}
 \]
 \caption{Bi-grading on $\g$}
 \label{F:bi-grading}
 \end{figure}
 
 The bi-grading on $\g$ induces a bi-grading on cochains and cohomology (since $\partial$ is $\g_0$-equivariant), in particular on the effective part $\bbE \subset H^2_+(\g_-,\g)$.  Given $(a,b) \in \Z^2$, let $\bbE_{a,b} = \{ \phi \in \bbE : \sfZ_1 \cdot \phi = a \phi, \, \sfZ_2 \cdot \phi = b \phi \}$ be the corresponding joint eigenspace.
 
 We note that $\sfZ_2$ acts on $\bigwedge^{2}(\sfrac{\g}{\fp})^{\ast}\tensor \g$ with eigenvalues ($\sfZ_2$-degrees) $0,1$ or $2$.  We will refer to the $G_0$-submodules in $\bbE$ of  {\em positive} $\sfZ_2$-degree as {\sl \Cclass{} modules} and those with {\em zero} $\sfZ_2$-degree as {\sl \Wilc{} modules} (see \S \ref{S:ODE-KH} for this terminology).
 
\begin{defn} \label{D:EC}
Let  $\bbE_{C} \subsetneq \bbE$ denote the direct sum of {\em all}  \Cclass{} modules and $\bbW \subsetneq \bbE$ the direct sum of {\em all}  \Wilc{} modules  in $\bbE$, i.e.\ $\bbE = \bbW \oplus \bbE_{C}$.
\end{defn} 

 \begin{rem} \label{rem:bi-grading}
 In the articles \cite{Doubrov2000, Doubrov2001, Medvedev2010, DM2014} computing the effective part $\bbE$, the gradings on $\g_0$-submodules of $\bbE$ were explicitly stated, but bi-gradings were not used.  However, these can be easily deduced from the cohomology results there (in particular, their realizations as (harmonic) 2-cochains) using the fact that $V$ and $\mathfrak{q}$ have $\sfZ_2$-degrees $-1$ and $0$ respectively.
 \end{rem}
 
 \subsection{Prolongation-rigidity}
 
 In view of \S\ref{S:upper}, it is important to understand when the Tanaka prolongation algebra $\fa^\phi$ has non-trivial prolongation in degree +1.

 \begin{lem} \label{L:NPR}
 Let $0 \neq \phi \in \bbE$.  Then $\fa^\phi_1 \neq 0$ if and only if $\phi$ lies in the direct sum of all $\bbE_{a,b}$ for $(a,b)$ that is a multiple of $(n,2)$.
 \end{lem}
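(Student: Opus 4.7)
The plan is to reduce the question to a one-line eigenvalue condition on $\phi$. Since $\g_1 = \R\sfY$ is one-dimensional, $\fa_1^\phi \subseteq \R\sfY$, and by Definition \ref{Tanakaalgebra} we have $\fa_1^\phi = \R\sfY$ precisely when $[\sfY, X] \cdot \phi = 0$ for every $X \in \g_{-1} = \R\sfX \oplus (\R E_n \tensor W)$. I would split $\g_{-1}$ along its bi-grading and evaluate these brackets summand-by-summand. For the $\g_{0,-1}$ part, the key point is that $E_n$ is the lowest-weight vector of the $\fsl_2$-irrep $\V_n$, so $\sfY \cdot E_n = 0$, whence $[\sfY, E_n \tensor w] = 0$ for every $w \in W$; this summand contributes no constraint. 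For the remaining summand, a direct $\fsl_2$-triple computation yields $[\sfY, \sfX] = -\sfH$. Consequently $\fa_1^\phi \neq 0$ is equivalent to the single condition $\sfH \cdot \phi = 0$.

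To interpret this as a bi-weight condition, observe that $\sfH$ and $\id_m$ lie in the centre of $\g_0 = \R\sfH \oplus \gl_m$, so they act as scalars on each $G_0$-irreducible summand of the completely reducible module $\bbE$ (Lemma \ref{lem:compreducibility}) and preserve the bi-weight decomposition $\bbE = \bigoplus_{(a,b)} \bbE_{a,b}$. Inverting the definitions $\sfZ_1 = -\tfrac{1}{2}(\sfH + n\,\id_m)$ and $\sfZ_2 = -\id_m$ gives $\sfH = -2\sfZ_1 + n\sfZ_2$, so for $\phi_{a,b} \in \bbE_{a,b}$ one has $\sfH \cdot \phi_{a,b} = (nb - 2a)\,\phi_{a,b}$. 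Writing $\phi = \sum \phi_{a,b}$ and using linear independence of the bi-weight summands, $\sfH \cdot \phi = 0$ is equivalent to $\phi_{a,b} = 0$ whenever $2a \neq nb$. In view of the $\sfZ_2$-degrees that actually appear in $\bbE$ (read off from Tables \ref{tab:scalar}--\ref{tab:vector} via Remark \ref{rem:bi-grading}), the surviving bi-weights $(a,b)$ are precisely the integer multiples of $(n,2)$, giving the stated characterization.

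The only subtlety is in the first step, where one must recognise that the $\g_{0,-1}$ part of $\g_{-1}$ drops out automatically; this is not built into the abstract Tanaka prolongation machinery and relies on the concrete lowest-weight position of $E_n$ in $\V_n$. Everything else is routine bookkeeping with the bi-grading together with a single scalar eigenvalue equation, so I do not expect any genuinely hard step.
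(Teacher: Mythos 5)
Your proposal is correct and takes essentially the same route as the paper's proof: both reduce $\fa^\phi_1 \neq 0$ to the single condition $[\sfY,\sfX] = -\sfH \in \fann(\phi)$ using $[\sfY,\g_{0,-1}] = 0$, then rewrite $\sfH = -2\sfZ_1 + n\sfZ_2$ to read off the bi-weight condition $2a = nb$. (Your final appeal to the tables is unnecessary, since $2a = nb$ is already equivalent to $(a,b)$ being a multiple of $(n,2)$, but this does not affect correctness.)
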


\begin{proof} Note that $\fa^\phi_1 \neq 0$ if and only if $\fa^\phi_1 = \g_1 = \mathbb{R} \sfY$.  Since $[\sfY,\g_{0,-1}] = 0$, then this occurs if and only if $[\sfY,\sfX] = -\sfH \in \fa^\phi_0 := \fann(\phi)$.  From \eqref{bi-grading}, we have $\sfH = -2\sfZ_1 + n\sfZ_2$, so $\sfH \in \fann(\phi)$ if and only if $\phi$ lies in the direct sum of the claimed modules.
\end{proof}

 \begin{defn} \label{D:PR}
 We say that a $\g_0$-submodule $\cO \subseteq \bbE$ is {\sl prolongation-rigid (PR)} if $\fa^\phi_1 = 0$ for any $0 \neq \phi \in \cO$.
 \end{defn}

 \subsection{Scalar case}
 \label{S-EPS}

 For scalar ODEs, the effective part $\bbE \subset H^2_+(\g_-,\g)$ (Table \ref{tab:scalar}) was computed by Doubrov -- see \cite[Prop.4]{Doubrov2001} for a summary and \cite{Doubrov2000} for details. (Bi-gradings are asserted using Remark \ref{rem:bi-grading}.)  Since $\g_0$ is spanned by $\sfZ_1$ and $\sfZ_2$, then all $\g_0$-irreps $\bbU \subset \bbE$ are 1-dimensional.

 \begin{table}[h] 
 \centering
 \[
 \begin{array}{|cccc|} \hline 
 \mbox{Type} & n & \g_0\mbox{-irrep } \bbU \subset \bbE & \mbox{Bi-grade}  \\ \hline \hline	
 \mbox{\Wilc} & \geq 3 & \underset{(3 \leq r \leq n+1) }{\bbW_r} & (r,0)  \\
\hline		
 \mbox{\Cclass} & 3 & \bbB_3 & (1,2) \\
 & 3 & \bbB_4 & (2,2)  \\
 & 4 & \bbB_6 & (4,2) \\
 & \geq 4 & \bbA_2 & (1,1)  \\
 & \geq 5 & \bbA_3 & (2,1)  \\
 & \geq 6 & \bbA_4 & (3,1) \\ \hline
 \end{array}
 \]
 \caption{Effective part $\bbE \subsetneq H^2_{+}(\g_{-},\g)$ for scalar ODEs of order $n+1 \ge 4$} 
 \label{tab:scalar}
 \end{table}
 
 \begin{lem}\label{lem:uubs} Consider the effective part $\bbE$ for scalar ODEs of order $n+1 \geq 4$.  Then:
 \begin{enumerate}
 \item[(a)] $\bbE$ is not PR if and only if $n = 4$ or $6$.  In particular, $(n,\bbU) = (4,\bbB_6)$ and $(6,\bbA_4)$ are not PR.
 \item[(b)] If $\bbU \subset \bbE$ is a $\g_0$-irrep, then
 $\fU_{\bbU} = \begin{cases}
 n+4, & \mbox{if } (n,\bbU) = (4,\bbB_6) \mbox{ or } (6,\bbA_4);\\
 n+3, & \mbox{otherwise}.
 \end{cases}$
 \item[(c)] $\fU = \begin{cases}
 \fM - 1 = n+4, & \mbox{if } n=4,6;\\
 \fM - 2 = n+3, & \mbox{otherwise}.
 \end{cases}$
 \end{enumerate}
 \end{lem}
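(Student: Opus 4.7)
In the scalar case $m=1$, the degree-zero subalgebra $\g_0 = \R\sfH \oplus \R\id_m$ is 2-dimensional, spanned equivalently by the bi-grading elements $\sfZ_1, \sfZ_2$, so every $\g_0$-irrep $\bbU \subset \bbE$ is 1-dimensional with a definite bi-grade $(a,b)$ read off from Table \ref{tab:scalar}. For any $0 \neq \phi \in \bbU$, the annihilator $\fann(\phi) \subset \g_0$ is the kernel of the weight sending $(\sfZ_1,\sfZ_2) \mapsto (a,b)$, hence a 1-dimensional hyperplane in $\g_0$. Moreover $\g_k = 0$ for all $k \geq 2$, so when assembling $\fa^\phi = \text{pr}(\g_-, \fann(\phi))$ the only non-negative piece to determine beyond $\fann(\phi)$ itself is $\fa^\phi_1 \subseteq \R\sfY$.

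For part (a), my plan is to invoke Lemma \ref{L:NPR}: in the scalar setting it reduces to checking whether the bi-grade $(a,b)$ satisfies $nb = 2a$, equivalently $\sfH \in \fann(\phi)$. Scanning Table \ref{tab:scalar}: the \Wilc{} modules $\bbW_r$ have $b=0$ but $a = r \geq 3$, so they are PR; the \Cclass{} modules $\bbB_3, \bbB_4$ (both at $n=3$) and $\bbA_2, \bbA_3$ (for $n\geq 4$ and $n\geq 5$ respectively) fail $nb = 2a$ because the required value of $n$ falls outside the admissible range; the only matches are $\bbB_6$ at $n=4$ with $(a,b)=(4,2)$, giving $nb = 8 = 2a$, and $\bbA_4$ at $n=6$ with $(a,b)=(3,1)$, giving $nb = 6 = 2a$. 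This establishes (a) and pins down the two non-PR pairs $(n,\bbU) = (4,\bbB_6)$ and $(6,\bbA_4)$.

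For part (b), I would combine the above with the dimension count
\begin{equation*}
\dim \fa^\phi \;=\; \dim \g_- + \dim \fann(\phi) + \dim \fa^\phi_1 \;=\; (n+2) + 1 + \dim \fa^\phi_1,
\end{equation*}
which equals $n+3$ for PR irreps and $n+4$ for the two non-PR ones. Part (c) is then immediate from \eqref{E:U-decomp} together with $\fM = m^2 + (n+1)m + 3 = n+5$ for $m=1$.

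Since the statement is essentially a bookkeeping consequence of Lemma \ref{L:NPR} and the explicit bi-grades in Table \ref{tab:scalar}, there is no substantive obstacle. The only subtlety is to verify the admissible ranges of $n$ for each $\g_0$-irrep carefully, so that the annihilator equation $nb = 2a$ is genuinely solvable within those ranges; this is what rules out spurious non-PR candidates such as $\bbA_2$ (which would require $n=2$) or $\bbA_3$ (which would require $n=4$).
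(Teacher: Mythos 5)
Your proposal is correct and follows essentially the same route as the paper's proof: part (a) by applying Lemma \ref{L:NPR} to the bi-grades in Table \ref{tab:scalar} (your condition $nb=2a$ is just the ``multiple of $(n,2)$'' criterion restated), part (b) by the dimension count $\dim\fa^\phi_{\le 0}=(n+2)+1=n+3$ plus the PR/non-PR dichotomy, and part (c) via \eqref{E:U-decomp}. The only difference is that you spell out the row-by-row verification of the table, which the paper leaves implicit.
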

 
 \begin{proof} Part (a) directly follows from Lemma \ref{L:NPR} and Table \ref{tab:scalar}.  For part (b), recall that $\dim \g_- = n+2$ and $\dim \fann(\phi) = 1$ for $0 \neq \phi \in \bbU$ since $\bbU$ is irreducible and $\sfZ \not\in \fann(\phi)$ (by regularity).  Thus, $\dim \fa^\phi_{\leq 0} = n+3$, so $\fU_{\bbU} = n+3$ when $\bbU$ is PR and $\fU_{\bbU} = n+4$ when $\bbU$ is not PR (when $(n,\bbU) = (4,\bbB_6)$ or $(6,\bbA_4)$).  Part (c) now follows by using \eqref{E:U-decomp}.
 \end{proof}

\begin{lem} \label{L:CC}
Consider the effective part $ \bbE$ for scalar ODEs \eqref{ODE:sym} of order $n+1 \ge 4$ and $\bbE_{C} = \bigoplus_i\bbU_i \subset \bbE$, the direct sum of all irreducible \Cclass{} modules $\bbU_i$. Then, for $0 \ne \phi \in \bbE_{C}$ such that $\dim \fa^\phi \ge n+3$, we have $\phi \in \bbU_i \subset \bbE_{C}$ for some $i$.
\end{lem}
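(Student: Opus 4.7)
The plan is to reduce the statement to the purely numerical observation that, for scalar ODEs, the bi-grades of the irreducible C-class modules appearing in $\bbE$ for a fixed $n$ are pairwise linearly independent in $\Z^2$. Since $\g_0 = \R\sfZ_1 \oplus \R\sfZ_2$ in the scalar case and each $\bbU_i \subset \bbE_C$ is $1$-dimensional sitting in a single bi-grade $(a_i,b_i)$, the annihilator of a $\g_0$-weight vector of bi-grade $(a_i,b_i)$ is the line $\{\alpha \sfZ_1 + \beta \sfZ_2 : \alpha a_i + \beta b_i = 0\}$. So if I can show pairwise independence of the occurring bi-grades, then any $\phi$ with nonzero components in two distinct $\bbU_i,\bbU_j$ must satisfy $\fann(\phi) = \fann(\phi_i)\cap \fann(\phi_j) = 0$, which is the key pigeonhole.

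I would proceed in three short steps. First, read off from Table \ref{tab:scalar} the C-class bi-grades for each $n$ and check pairwise independence: $n=3$ gives $\{(1,2),(2,2)\}$; $n=4$ gives $\{(4,2),(1,1)\}$; $n=5$ gives $\{(1,1),(2,1)\}$; and $n\geq 6$ gives $\{(1,1),(2,1),(3,1)\}$. In each case, the $2\times 2$ determinants are nonzero, so all pairs are linearly independent. Second, write $\phi = \sum_i c_i \phi_i$ with $\phi_i \in \bbU_i$ and suppose two coefficients $c_i,c_j$ are nonzero. Then $\fann(\phi) \subseteq \fann(\phi_i) \cap \fann(\phi_j)$, and the two defining linear equations in $(\alpha,\beta)$ have linearly independent coefficients, forcing $\fann(\phi)=0$, hence $\fa^\phi_0 = 0$.

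Third, rule out $\fa^\phi_1 \neq 0$ by Lemma \ref{L:NPR}: the condition requires every bi-grade $(a_k,b_k)$ of a nonvanishing component to be a scalar multiple of $(n,2)$, which would in particular make $(a_i,b_i)$ and $(a_j,b_j)$ proportional, contradicting the independence from step one. Combined with $\fa^\phi_k = 0$ for $k \geq 2$ (since $\g_k = 0$ there), this gives
\begin{align}
\dim \fa^\phi \;=\; \dim \g_- + \dim \fa^\phi_0 + \dim \fa^\phi_1 \;=\; (n+2) + 0 + 0 \;=\; n+2 \;<\; n+3,
\end{align}
and the contrapositive delivers the claim. There is no real obstacle here: the whole argument hinges on the finite table-check of pairwise linear independence of C-class bi-grades, which holds uniformly across all relevant $n$, combined with the already-established Lemma \ref{L:NPR}.
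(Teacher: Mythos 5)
Your proof is correct and follows essentially the same route as the paper's: both arguments rest on the observation that the C-class bi-grades in Table \ref{tab:scalar} are pairwise non-proportional, so a $\phi$ with nonzero components in two distinct $\bbU_i$ has $\fann(\phi)=0$ and hence $\dim\fa^\phi = \dim\g_- = n+2$. Your explicit handling of $\fa^\phi_1$ via Lemma \ref{L:NPR} is a minor elaboration of what the paper leaves implicit (namely that $\fa^\phi_0=0$ already forces $\fa^\phi_1=0$ by the definition of Tanaka prolongation).
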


\begin{proof}
Suppose that for $0 \ne \phi \in \bbE_{C}$, $\dim \fa^\phi \ge n+3$. Since $\dim \g_{-} = \dim \fa^{\phi}_{-} = n+2$, then  $\fa^\phi_0 =\fann (\phi)$ is a non-trivial proper subspace of $\g_{0}$.  Since $\dim \g_{0} = 2$, then $\dim \fa^\phi_0 =1$.  None of the bi-grades for the \Cclass{} modules in Table \ref{tab:scalar} is a multiple of any other, so $\dim \fa^\phi_0 =1$ forces $\phi \in \bbU_i \subset \bbE_{C}$ for some $i$. 
\end{proof}

 \subsection{Vector case}
 \label{S-EPV}

 For vector ODEs, the effective part $\bbE \subset H^2_+(\g_-,\g)$ (Table \ref{tab:vector}) was computed by Medvedev \cite{Medvedev2011} for the 3rd order case, and Doubrov--Medvedev \cite{DM2014} for the higher order cases.  (Bi-gradings are asserted using Remark \ref{rem:bi-grading}.)  We have $\g_0 = \operatorname{span}\{ \sfZ_1,\sfZ_2 \} \oplus \fsl(W)$, so any $\g_0$-irrep $\bbU \subset \bbE$ is completely determined by its bi-grading and highest weight $\lambda$ with respect to $\fsl(W) \cong \fsl_m$.  The latter can be expressed in terms of the fundamental weights $\lambda_1,\ldots,\lambda_{m-1}$ of $\fsl_m$ with respect to the standard choice of Cartan subalgebra and simple roots.  We note that some of the  modules appearing in \cite{Medvedev2011,DM2014} are not $\g_0$-irreducible, so we have decomposed them here into their trace-free and trace parts.  We also define $\bbW_r := \bbW_r^{\tf} + \bbW_r^{\tr}$ and $\bbA_2 := \bbA_2^{\tf} + \bbA_2^{\tr}$.

 \begin{table}[h] 
 \centering
 \[
 \begin{array}{|cccccc|} \hline 
 \mbox{Type} & n  &\g_0\mbox{-irrep } \bbU & \mbox{Bi-grade}  &  \fsl(W)\mbox{-module } \bbU & \fsl(W) \mbox{ h.w. } \lambda\\ \hline\hline
 \mbox{\Wilc} & \ge 2 & \underset{(2 \leq r \leq n+1)}{\bbW_r^{\tf}} & (r,0)   & \mathfrak{sl}(W) & \lambda_1 + \lambda_{m-1}\\
 & \geq 2 & \underset{(3 \leq r \leq n+1)}{\bbW_r^{\tr}} & (r,0)  &  \mathbb{R} \id_m & 0\\ \hline
 \mbox{\Cclass} & 2 &\bbB_4 & (2,2)  & S^2 W^{\ast} & 2 \lambda_{m-1}\\				
 & \ge 2 &\bbA_2^{\tf} & (1,1)  &  (S^2 W^{\ast}\tensor W)_0 & \lambda_1 + 2\lambda_{m-1}\\
 & \geq 3 &\bbA_2^{\tr} & (1,1)  & W^{\ast} & \lambda_{m-1}\\ \hline
 \end{array}
 \]
 \caption{Effective part $\bbE \subsetneq H^2_{+}(\g_{-},\g)$ for vector ODEs of order $n+1 \geq 3$ with $m \geq 2$}
 \label{tab:vector}
 \end{table}
 
   \begin{table}[h] 
 \centering
 \[
 \begin{array}{|ccc ccc|} \hline 
 \mbox{Type} & n & \g_0\mbox{-irrep } \bbU \subset \bbE & \underset{0 \neq \phi \in \bbU}{\max} \dm\mathfrak{ann}(\phi) &\mbox{Is}\; \bbU \mbox{ PR?} & \mathfrak{U}_{\bbU}\\ \hline\hline	
 \mbox{\Wilc} 
 & \ge 2 & \underset{(2 \leq r \leq n+1)}{\bbW_r^{\tf}} &m^2-2m+3 & \checkmark &  \fM -2m+1 \\
 & \geq 2 & \underset{(3 \leq r \leq n+1)}{\bbW_r^{\tr}} & m^2 & \checkmark &  \fM -2 \\ \hline
 \mbox{\Cclass} 
 & 2 & \bbB_4 & m^2-m+1 & \times &  \fM -m\\			
 & 2 & \bbA_2^{\tf} & m^2-2m+3 & \times &  \fM -2m+2 \\
 & \ge 3 & \bbA_2^{\tf} & m^2-2m +3 & \checkmark &  \fM -2m+1\\
 & \geq 3& \bbA_2^{\tr} & m^2-m+1 & \checkmark &  \fM -m-1\\	\hline
 \end{array}
 \]
  (The contact symmetry dimension of the trivial ODE is $\fM = m^2+(n+1)m+3$.)\\[0.1in]
 \caption{Upper bounds $\fU_\bbU$ for vector ODEs of order $n+1 \geq 3$ with $m \geq 2$}
 \label{tab:lemmaUsystems}
 \end{table}

\begin{lem}\label{+prs} 
 Consider the effective part $\bbE$ for vector ODEs of order $n+1 \geq 3$ with $m \geq 2$.  Then:
 \begin{enumerate}
 \item[(a)] $\bbE$ is not PR if and only if $n=2$.  When $n=2$, $\bbA_2^{\tf}$ and $\bbB_4$ are not PR, while $\bbW_r^{\tf}$ and $\bbW_r^{\tr}$ are PR.
 \item[(b)] If $\bbU \subset \bbE$ is a $\g_0$-irrep, then $\fU_\bbU$ is given in Table \ref{tab:lemmaUsystems}.
 \item[(c)] $\fU = \fM-2 = m^2+(n+1)m+1$.
 \end{enumerate}
\end{lem}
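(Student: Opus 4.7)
The plan is to follow the template of the scalar case in Lemma \ref{lem:uubs}, proceeding irrep-by-irrep through Table \ref{tab:vector}.

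Part (a) will be a direct application of Lemma \ref{L:NPR}: $\fa^\phi_1 \neq 0$ iff $\sfH = -2\sfZ_1 + n\sfZ_2$ annihilates $\phi$, i.e.\ the bi-grade $(a,b)$ of $\phi$ satisfies $2a = nb$. The Wilczynski bi-grades $(r,0)$ with $r \geq 2$ never satisfy this, so $\bbW_r^{\tf}$ and $\bbW_r^{\tr}$ are PR for all $n$. The bi-grade $(2,2)$ of $\bbB_4$ satisfies the condition exactly when $n=2$, which is precisely when $\bbB_4$ appears in $\bbE$, so $\bbB_4$ is not PR. The bi-grade $(1,1)$ of $\bbA_2^{\tf}, \bbA_2^{\tr}$ requires $n=2$, but $\bbA_2^{\tr}$ is only present for $n \geq 3$; consequently only $\bbA_2^{\tf}$ contributes non-PR behaviour, and only at $n=2$.

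For part (b), I would decompose $\dim \fa^\phi = \dim \g_- + \dim \fann(\phi) + \dim \fa_1^\phi$, with $\dim \g_- = (n+1)m + 1$ and $\dim \fa_1^\phi \in \{0,1\}$ determined by part (a). The remaining computation, for each irrep $\bbU$, is $\max_{0 \neq \phi \in \bbU} \dim \fann(\phi)$, equivalently the minimum image dimension of $A \mapsto A \cdot \phi$ from $\g_0 = \R \sfZ_1 \oplus \R \sfZ_2 \oplus \fsl(W)$ to $\bbU$. Since $\sfZ_1, \sfZ_2$ act on $\bbU$ by the non-zero scalars prescribed by its bi-grade, the image contains $\R \phi$ and, once $\R \phi \subset \fsl(W) \cdot \phi$ (verified in each case), coincides with $\fsl(W) \cdot \phi$. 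The task then reduces to standard $\fsl_m$-orbit computations: $\bbW_r^{\tr} \cong \R \id_m$ has trivial $\fsl_m$-action giving orbit dimension $0$; $\bbA_2^{\tr} \cong W^*$ has orbit dimension $m$ at $\phi = y_1$; $\bbW_r^{\tf} \cong \fsl_m$ (adjoint) achieves minimal non-zero orbit dimension $2(m-1)$ at a rank-one nilpotent $\phi = e_{12}$; $\bbB_4 \cong S^2 W^*$ has orbit dimension $m$ at $\phi = y_1^2$; and $\bbA_2^{\tf} \cong (S^2 W^* \otimes W)_0$ has orbit dimension $2(m-1)$ at $\phi = y_1^2 \otimes w_2$. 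Combining with $\dim \g_-$ and the PR contribution reproduces $\fU_\bbU$ in Table \ref{tab:lemmaUsystems}.

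Part (c) then follows immediately from \eqref{E:U-decomp}: comparing the entries in Table \ref{tab:lemmaUsystems}, the largest is $\fU_{\bbW_r^{\tr}} = \fM - 2$, and since $m \geq 2$ each of the other entries ($\fM - 2m+1$, $\fM - m$, $\fM - 2m+2$, $\fM - m - 1$) is strictly smaller. The main technical obstacle is the orbit-dimension computation for $\bbA_2^{\tf} = (S^2 W^* \otimes W)_0$: the test element $\phi = y_1^2 \otimes w_2$ must be verified trace-free, and the resulting image must be counted accounting for the $\fsl_m$-trace constraint and coefficient overlap (for instance, the entry $A_{12}$ contributes simultaneously to the coefficients of $y_1 y_2 \otimes w_2$ and $y_1^2 \otimes w_1$, reducing the naive count of $2m-1$ free directions to $2m-2$). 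The other minimizations are either trivial or rely on well-known facts, such as the minimal nilpotent orbit in $\fsl_m$ having dimension $2(m-1)$.
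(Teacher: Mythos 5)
Your proposal is correct and, for parts (a) and (c), matches the paper's argument: part (a) is the same direct application of Lemma \ref{L:NPR} to the bi-grades in Table \ref{tab:vector}. For part (b) the underlying strategy is the same (reduce $\fU_\bbU$ to $\max_{0\neq\phi\in\bbU}\dim\fann(\phi)$ plus the PR correction), but the execution differs: the paper evaluates at a highest weight vector $\phi_0$, identifies the parabolic $\fu\subset\fsl(W)$ stabilizing the line $\R\phi_0$ directly from which fundamental weights occur in $\lambda$, and gets $\dim\fann(\phi_0)=1+\dim\fu$ from block sizes; you instead compute $\dim(\g_0\cdot\phi)$ by hand at explicit extreme weight vectors ($e_{12}$, $y_1$, $y_1^2$, $y_1^2\otimes w_2$). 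The two computations agree (your test vectors are Weyl conjugates of highest weight vectors), and your overlap analysis for $\bbA_2^{\tf}$ is right. What the paper's route buys is the justification that these vectors actually realize the \emph{global} maximum of $\dim\fann(\phi)$ over $\bbU\setminus\{0\}$: this follows from the standard fact that the highest weight orbit is the unique closed (hence minimal-dimensional) orbit in $\mathbb{P}(\bbU)$. Your appeal to ``the minimal nilpotent orbit has dimension $2(m-1)$'' is slightly too loose on its own: in the adjoint module, semisimple elements such as $\operatorname{diag}(m-1,-1,\dots,-1)$ also have $\fsl_m$-orbit of dimension $2(m-1)$, yet for them $\R\phi\not\subset\fsl_m\cdot\phi$, so $\dim(\g_0\cdot\phi)=2m-1$ and their annihilator is strictly smaller; the quantity you must minimize is $\dim(\R\phi+\fsl_m\cdot\phi)$, and the clean reason the minimum sits at (Weyl conjugates of) highest weight vectors is the closed-orbit fact. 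Finally, in part (c) your claim that the other entries are \emph{strictly} smaller than $\fM-2$ fails for $m=2$, where $\fU_{\bbB_4}=\fM-m$ and $\fU_{\bbA_2^{\tf}}=\fM-2m+2$ both equal $\fM-2$; the conclusion $\fU=\fM-2$ is unaffected since \eqref{E:U-decomp} only requires the maximum.
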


\begin{proof} Part (a) directly follows from Lemma \ref{L:NPR} and Table \ref{tab:vector}.  Let us prove part (b).  In order to compute $\fU_\bbU$, it suffices to maximize $\dim \fann(\phi)$ among $0 \neq \phi \in \bbU$.  (If $\bbU$ is not PR, then $\fa_1^\phi = \R \sfY$ for all $0 \neq \phi \in \bbU$.)  Since $\bbU$ is $\g_0$-irreducible, the maximum is achieved on any highest weight vector $\phi_0$ (and indeed, along the $\SL_m$-orbit through $\phi_0$).  Let $\fu \subset \fsl(W) \cong \fsl_m$ be the parabolic subalgebra preserving $\phi_0$ up to a scaling factor.  Since $\sfZ_1$ and $\sfZ_2$ also preserve $\phi_0$ up to scale, then we obtain
 \begin{align} \label{E:hw-ann}
 \dim \fann(\phi_0) = 1 + \dim \fu.
 \end{align}
 For each $\g_0$-irrep $\bbU \subset \bbE$, the highest $\fsl_m$-weight $\lambda$ and parabolic $\fu \subset \fsl_m$ is given below.
 \begin{align}
 \begin{array}{|c||ccccc|} \hline
 \bbU & \bbW_r^{\tf} & \bbW_r^{\tr} & \bbB_4 & \bbA_2^{\tf} & \bbA_2^{\tr}\\
 \lambda & \lambda_1 + \lambda_{m-1} & 0 & 2\lambda_{m-1} & \lambda_1 + 2\lambda_{m-1} & \lambda_{m-1}\\
 \fu & \fp_{1,m-1} & \fsl_m & \fp_{m-1} & \fp_{1,m-1} & \fp_{m-1}\\ \hline
 \end{array}
 \end{align}
 The subscript notation for parabolics is the same as that used in \cite{KT2017}. (We caution that $\fp$ ornamented with subscripts here is not related to $P$ for the trivial ODE.)  Concretely, each such $\fu$ is a block upper triangular, trace-free $m \times m$ matrix with diagonal blocks of size:
 \begin{itemize}
 \item $1,m-2,1$ for $\fp_{1,m-1}$, so $\dim \fu = m^2-1- 2(m-2)-1 = m^2-2m+2$;
 \item $m-1,1$ for $\fp_{m-1}$, so $\dim \fu = m^2-1- (m-1) = m^2-m$.
 \end{itemize}
 Using $\dim \fg_- = 1 + (n+1)m$ and \eqref{E:hw-ann}, we obtain $\dim \fa^{\phi_0}_{\leq 0}$.  When $\bbU$ is PR, this equals $\fU_{\bbU}$.  When $\bbU$ is not PR, we must augment it by one.  Part (c) now follows by using \eqref{E:U-decomp}.
\end{proof}

 \section{Submaximal symmetry dimensions}
 \label{S:ODE-KH}

 For higher order ODEs, we review the known local expressions for $\kappa_H$, labelled here by: 
 \begin{itemize}
 \item $\cW_r$: \quad\,\,\quad {\sl Generalized Wilczynski} invariants (with $\sfZ_2$-degree 0);
 \item $\mathcal{A}_r, \mathcal{B}_r$: \quad {\sl C-class} invariants (with $\sfZ_2$-degrees 1 and 2 respectively).
 \end{itemize}
 These correspond to the $\g_0$-irreps $\bbW_r, \bbA_r, \bbB_r \subset \bbE$ introduced earlier in \S \ref{S-EPS} and \S \ref{S-EPV}.  (The expressions for these invariants were computed with respect to some adapted coframing.  If a different adapted coframing is used, these expressions would transform tensorially according to the structure of the indicated modules.) For each irreducible $\fg_0$-submodule $\bbU \subset \bbE$, we use these differential invariants to exhibit explicit ODE models with abundant symmetries having $\kappa_H$ non-zero and concentrated in $\bbU \subset \bbE$.
 
 For {\em all} vector cases and most scalar cases, these exhibited models realize $\fS_\bbU = \fU_\bbU$, cf.\ Tables \ref{tab:linearmodels}, \ref{tab:submax1} and \ref{tab:submax-Kh}. The contact symmetries of the given ODE models are stated in terms of their projections to $(t,\bu)$-space, i.e.\ $J^0(\R,\R^m)$, in the case of point symmetries, or in terms of their projections to $(t,\bu,\bu_1)$-space, i.e.\ $J^1(\R,\R^m)$, in the case of genuine contact symmetries.  In \S\ref{S:conclusion}, exceptional cases (where $\fS_\bbU < \fU_\bbU$) are discussed and we conclude the proofs of Theorems \ref{T:main} and \ref{T:main2}.

 \subsection{Generalized Wilczynski invariants}
 \label{S-FundInvL}

 Consider the class of linear ODEs of order $n+1$:
\begin{align} \label{lODE}
\bu_{n+1} + R_n(t)\bu_{n}+ \ldots + R_1(t)\bu_{1} + R_0(t)\bu = 0,
\end{align}
 where $R_j(t)$ is an $\homo(\R^m)$-valued function.  The invertible transformations
 \begin{equation} \label{Trans}
 (t,\bu) \mapsto (\lambda(t), \mu(t)\bu),\quad\mbox{where}\quad
 \lambda : \R \to \R^\times,\quad \mu : \R \to \operatorname{GL}(m),
 \end{equation} 
 constitute the most general Lie pseudogroup preserving the class \eqref{lODE}.  Using \eqref{Trans}, any equation \eqref{lODE} can be brought into {\sl canonical Laguerre--Forsyth form} defined by $R_n = 0$ and $\tr(R_{n-1}) = 0$.

As proved by Wilczynski \cite{Wilczynski1905} for $m=1$ and Se-ashi \cite{Se-ashi1988} for $m \ge 2$, the following expressions 
\begin{align}\label{I-W}
\Theta_{r} =
\sum_{k=1}^{r-1}(-1)^{k+1}\dfrac{(2r-k-1)!(n-r+k)!}{(r-k)!(k-1)!}R_{n-r+k}^{(k-1)}, \quad r=2,\ldots,n+1,
\end{align}
 are fundamental (relative) invariants with respect to those transformations \eqref{Trans} preserving the Laguerre--Forsyth form.  These invariants are called the {\sl Se-ashi--Wilczynski invariants} and $r$ is the degree of the invariant. We remark that:
 \begin{itemize}
 \item If all $R_j$ are independent of $t$, then all $\Theta_r$ are constant multiples of $R_{n+1-r}$.
 \item For $m=1$ (scalar ODEs), we have $R_{n-1}(t) = 0$ and this forces $\Theta_2 \equiv 0$.
 \end{itemize}
  
The generalized Wilczynski invariants $\cW_r$ directly generalize the Se-ashi--Wilczynski invariants to non-linear ODEs.  We refer to the corresponding modules $\bbW_r$ as being of {\sl Wilczynski-type}.  (Similarly for trace or trace-free parts.)
 
 \begin{defn}\label{D:WI}
 For \eqref{ODE:sym}, $\cW_r$ are defined as $\Theta_{r}$ evaluated at its linearization along a solution $\bu$. Formally, $\cW_r$ are obtained from \eqref{lODE} by substituting $R_r(t)$ by the matrices $\left(-\frac{\partial f^a}{\partial u_r^b}\right)$ and the usual derivative by the total derivative.
 \end{defn}
 
It was proved by Doubrov \cite{Doubrov2008} that $\cW_r$ do not depend on the choice of solution $\bu$ and are indeed (relative) contact invariants of \eqref{ODE:sym}.  Table \ref{tab:linearmodels} exhibits {\em constant coefficient} linear ODEs with $\kappa_H \not \equiv 0$, $\img(\kappa_H) \subset \bbU$ and contact symmetry dimension realizing $\fU_\bbU$, so $\fS_\bbU = \fU_\bbU$ for modules $\bbU$ of Wilczynski type. 

\begin{table} [h]
	\centering
	\[
	\begin{array}{|c|c|c| c|c|c|} 	
	\hline 
	n &m&\bbU &\mbox{ODE with } \img(\kappa_H) \subset \bbU & \mbox{Sym dim} & \mbox{Contact symmetries} \\ 
	\hline	\hline
	\ge 3 & 1 & \underset{(3 \leq r \leq n+1)}{\bbW_r} 
	& u_{n+1} = u_{n+1-r} & \fM-2 
	& \underset{(\{ s_k \}_{k=1}^{n+1} {\tiny \mbox{ solns of } } u_{n+1}=u_{n+1-r})}{\partial_t, \, u \partial_u, \, s_k \partial_u}\\
	\hline
	\geq 2 & \ge 2 & \underset{(3 \leq r \leq n+1)}{\bbW_r^{\tr}} & \underset{(1 \leq a \leq m)}{u_{n+1}^a = u_{n+1-r}^a} & \fM -2 & 
	\underset{(1 \leq a,b \leq m; \,\,\, \{ s_k \}_{k=1}^{n+1} {\tiny \mbox{ solns of } } u_{n+1}=u_{n+1-r})}{\partial_t, \, u^a \partial_{u^b}, \, s_k \partial_{u^a}}\\
	\hline	 
	\ge 2 & \ge 2 & \underset{(2 \leq r \leq n+1)}{\bbW_r^{\tf}} &
	\underset{(1 \leq a \leq m)}{u_{n+1}^a = u_{n+1-r}^2 \delta_1^a}
	&\fM -2m+1 & \begin{array}{c} 
		\underset{(1 \le a, b \le m,\,\, a \ne 2,\, b\ne 1,\,\, 1 \le i \le n)}{
		\partial_t,\,\, 
		\partial_{u^a},\,\,
		t^i{\partial_{u^a}},\,\,
		u^b\partial_{u^a}, }\\
		t \partial_t + r u^1 \partial_ {u^1},\,\,
		u^1 \partial_{u^1} + u^2 \partial_{u^2},\\
		\underset{(n+1 \le k \le n+r)}{\frac{t^k}{k!}\partial_{u^1} + \frac{t^{k-r}}{(k-r)!}\partial_{u^2}},\\
{\small \mbox{for}}\,\, 2 \leq r \leq n\,\, {\small\mbox{in addition:}}\\
\underset{(0 \le \ell \le n-r)}{ t^\ell \partial_{u^2}}
		\end{array}\\	\hline
	\end{array}
	\]
	 (The contact symmetry dimension of the trivial ODE is $\fM = m^2+(n+1)m+3$.)\\[0.1in]
	\caption{Constant coefficient linear ODEs realizing $\fS_\bbU = \fU_\bbU$ for $\bbU$ of Wilczynski type} 
	\label{tab:linearmodels}
\end{table}

 \subsection{C-class invariants}
 \label{S:Cclass}

As formulated in \cite{CDT2020}, an ODE \eqref{ODE:sym} is of {\sl C-class} if the curvature of the corresponding canonical Cartan geometry satisfies $\kappa(\sfX,\cdot ) = 0$.  This can be characterized at the harmonic level in terms of the generalized Wilczynski invariants $\cW_r$.  Necessity of all $\cW_r \equiv 0$ follows from \cite[Thm.4.1]{CDT2020}, while sufficiency is established in \cite[Thm.4.2]{CDT2020}.  Here, we abuse the terminology and refer to the modules $\bbA_r,\bbB_r$ and corresponding invariants $\cA_r, \cB_r$ as being of {\sl C-class} type (despite the fact that they are defined in general, even for ODEs that are not of C-class).

Below are the C-class invariants of \eqref{ODE:sym}:

\begin{itemize}
\item {\em Scalar case}: The C-class invariants of $u_{n+1} = f(t,u,u_1,...,u_n)$ were computed by Doubrov \cite{Doubrov2001} (see also \cite[Example 6]{DM2014}):
 \begin{align} \label{E:scalar-inv}		
 \begin{split}
 n=3 :\,\,\, \mathcal{B}_3 &=f_{333}, \\
 n=3 :\,\,\, \mathcal{B}_4 &= f_{233}+ \frac{1}{6} (f_{33})^2 + \frac{9}{8} f_3 f_{333} + \frac{3}{4} \frac{d}{dt}f_{333}, \\
 n=4 :\,\,\, \mathcal{B}_6 &= f_{234}-\frac{2}{3}f_{333} - \frac{1}{2}(f_{34})^2\quad\md\quad \langle \mathcal{A}_2, \mathcal{W}_3\rangle,  \\
 n\geq 4 : \,\, \mathcal{A}_2 &=f_{nn},  \\
 n\geq 5 :\,\, \mathcal{A}_3&=f_{n,n-1}+ \frac{n(n-1)}{(n+1)(n-2)}f_n f_{nn} + \frac{n}{n-2}\frac{d}{dt}f_{nn},  \\
 n \geq 6 : \,\, \mathcal{A}_4 &=f_{n-1,n-1} \quad\md\quad \langle \mathcal{A}_2, \mathcal{A}_3, \cW_3\rangle.
 \end{split}
 \end{align}
 Here, $f_i := \frac{\partial f}{\partial u_i}$, see \eqref{E:EV} for $\frac{d}{dt}$, and $\langle \mathcal{I}\rangle$ denotes the differential ideal generated by an invariant $\mathcal{I}$.

\item {\em Vector case}: For $m\ge 2$, the C-class invariants were computed by Medvedev \cite{Medvedev2011} for $n=2$ and by Doubrov--Medvedev \cite{DM2014} for $n \ge 3$.  Letting $\operatorname{tf}$ refer to the trace-free part, we have:
\begin{align}
\begin{split}
n \geq 2: \,\, {(\mathcal{A}_{2})}^a_{bc} &=\tf\left( \frac{\partial^2{f}^a}{\partial{u_n^b}\, \partial{u_n^c}}\right), \\
n = 2: \,\,\, (\mathcal{B}_4)_{bc}&= -\frac{\partial H_c^{-1}}{\partial u_{1}^b}+ \frac{\partial}{\partial u_2^b} \frac{\partial}{\partial u_2^c}{H^t}-\frac{\partial}{\partial u_2^c}\frac{d}{dt}{H^{-1}_b} -\frac{\partial}{\partial u_2^c}\left(\sum_{a=1}^m H_a^{-1}\frac{\partial f^a}{\partial u_2^b}\right) + 2H^{-1}_b H^{-1}_c,
\end{split}
\end{align}
where
\begin{align}
H_b^{-1}= \frac{1}{6(m+1)} \sum_{a=1}^m \frac{\partial^2{f}^a}{\partial{u_2^a}\, \partial{u_2^b}}, \qquad
 H^t = -\frac{1}{4m} \sum_{a=1}^m \left( \frac{\partial f^a}{\partial u_{1}^a}-\frac{d}{dt}\frac{\partial f^a}{\partial u_{2}^a}+ \frac{1}{3} \sum_{c=1}^m \frac{\partial f^a}{\partial u_2^c} \frac{\partial f^c}{\partial u_2^a}\right).
\end{align}
\end{itemize}

 Tables \ref{tab:submax1} and \ref{tab:submax-Kh} respectively exhibit scalar ODEs and vector ODEs with $\kappa_H \not \equiv 0$,  $\img(\kappa_H) \subset \bbU$ and contact symmetry dimension realizing $\fS_\bbU = \fU_\bbU$ for modules $\bbU$ of C-class type. These ODEs are examples of C-class equations since all $\cW_r \equiv 0$. These scalar ODEs are well-known and stated for example in \cite[pp. 205-206]{Olver1995}, but their harmonic curvature classification was not given there. We remark that for the ODE in the first row of Table \ref{tab:submax1}, the $\kappa_H$-classification is deduced from the invariants when $n=3$.  For $n \geq 4$ however, $\img(\kappa_H) \subset \bbA_2$ cannot be asserted by using the invariants alone since $\cB_6$ and $\cA_4$ were computed only up to a differential ideal containing $\cA_2$, and we have $\cA_2 \ne 0$ for this ODE (and $\cA_3 \equiv 0$ for $n \geq 5$).  However, since the ODE admits an $(n+3)$-dimensional contact symmetry algebra, then by Lemma \ref{L:CC} the conclusion $\img (\kappa_H) \subset \bbA_2$ follows.
 
 \begin{table} [h]
	\centering
	\[
	\begin{array}{|c|c| c|c|c|} 	
	\hline 
	n &\bbU &\mbox{ODE with}\, \img(\kappa_H) \subset \bbU & \mbox{Sym dim} & \mbox{Contact symmetries} \\ 
	\hline	\hline
	3 & \bbB_4 & \multirow{3}{*}{$nu_{n-1}u_{n+1}-(n+1)(u_{n})^2 = 0$} & \multirow{3}{*}{$\fM-2 = n+3$} & 
 	\multirow{2}{*}{$\begin{array}{c}
\partial_t,\,\,\,\partial_u,\,\,\,t\partial_t,\,\,\,u\partial_u,\\
	{t^2}\partial_t + (n-2)tu\partial_u,\\ 
	t\partial_u,\,\,\, \ldots,\,\,\, t^{n-2}\partial_u
	\end{array}$}\\[0.5cm]
	\ge 4&\bbA_2&&&\\
	\hline
	4 & \bbB_6 & 9 (u_2)^2 u_5 - 45 u_2 u_3 u_4 + 40(u_3)^3 = 0 & \fM-1 = 8 &  \begin{array}{c}
	\partial_t,\,\,\, \partial_u,\,\,\,
 t \partial_t, \,\,\, u\partial_t,\,\,\,t\partial_u,\\ u\partial_u,\,\,\,
 tu\partial_t + u^2\partial_u,\\
  t^2 \partial_t + (n-3)tu\partial_u
	\end{array}
	\\ \hline
	6 &\bbA_4 & \begin{array}{c}
	10(u_{3})^3u_{7}-70(u_{3})^2u_{4}u_{6}\\-49(u_{3})^2(u_{5})^2
		+ 280u_{3}(u_{4})^2u_{5}\\-175(u_{4})^4 = 0
		\end{array}   & \fM-1 = 10 & \begin{array}{c}
 \partial_t,\,\,\, 
 \partial_u,\,\,\,
 t\partial_t - u_1\partial_{u_1},\\
 t\partial_u + \partial_{u_1},\,\,\,
 {t^2}\partial_u + 2t\partial_{u_1},\\
 u \partial_u + u_1 \partial_{u_1},\,\,\,
 2u_1\partial_t +{u_1^2}\partial_u, \\
 {t^2}\partial_t + 2tu\partial_u +2u\partial_{u_1},\\
 (2tu_1-2u)\partial_t +{t}u_1^2 \partial_u + {u_1^2}\partial_{u_1},\\ 
 	(2t^2u_1 - 4tu)\partial_t +({t^2}u_1^2 - 4u^2)\partial_u\\
	+ (2tu_1^2-4 u u_{1})\partial_{u_1}
		\end{array} \\
	\hline
	\end{array}
	\]
	\caption{Scalar ODEs realizing $\fS_\bbU = \fU_\bbU$ for $\bbU$ of C-class type} 
	\label{tab:submax1}
\end{table}
    \begin{table}[h] 
 	\centering
 	\[
 	\begin{array}{|c|c |c|c|c|} \hline 
 	n & \bbU & \mbox{ODE with } \img(\kappa_H) \subset \bbU &\mbox{Sym dim} &\mbox{Contact (point) symmetries} \\ 
 	\hline\hline
 	2 &\bbB_4  & \multirow{6}{*}{$\begin{array}{c}
 	\underset{(1 \le a \le m)}{u_{n+1}^a= \displaystyle\frac{(n+1)u_n^1 u_n^a}{nu^1_{n-1}}}	\\
 		\end{array}$}   &\multirow{6}{*}{$\begin{cases}
 	\fM-m,& n=2\\
 	\fM-m-1,& n\ge 3
 	\end{cases}$} & \raisebox{-2mm}{\multirow{3}{*}{$\begin{array}{c}
 	\partial_t,\,\, 
	t \partial_t,\,\, 
	u^1\partial_{u^1},\\ 
	\underset{(1 \le a,\,b \le m,\,\, b\ne 1,\,1 \le i,\,j \le n-1,\,\,j\ne n-1)}{\partial_{u^a},\,\,
	u^a \partial_{u^b},\,\,
	tu^1\partial_{u^b},\,\,
	t^j \partial_{u^1},\,\,
	t^i \partial_{u^b}},\\
 t^2 \partial_t + (n-2) t u^1\partial_{u^1}\\
 \quad\quad\quad + (n-1) t \sum_{a=2}^m u^a \partial_{u^a},\\
 	{\small \mbox{for}}\,\, n=2\,\, {\small\mbox{in addition:}}\,\, u^1 \sum_{a=1}^m u^a \partial_{u^a}\\
 		\end{array}$}}\\[2.0cm]
 	\ge 3 &\bbA_2^{\tr}& & &\\
 	\hline			
 	\ge 2  &\bbA_2^{\tf}  & 
 	\begin{array}{c}
 	\underset{(1 \le a \le m)}{u_{n+1}^a= (u_n^2)^2\delta_1^a}	\\
 		\end{array}
 	& \begin{cases}
 	\fM -2m+2, & n=2\\
 	\fM-2m+1, & n\ge 3
 	\end{cases}
  & \begin{array}{c}
 		{\partial_t},\,\,\underset{(1 \le a,\,b,\, c\, \le m,\,\,a\ne 2,\,\, b \ne 1,\,\, 1 \le i,\,j \le n,\, i\ne n)}{\partial_ {u^c},\,\,t^i\partial_ {u^2},\,\,t^j\partial_{u^a},\,\,u^{b}\partial_{u^a},}\\t{\partial_t} - (n-1)u^1{\partial_{u^1}},\,\,
 	   2u^1{\partial_{u^1}} +u^2{\partial_ {u^2}},\\
 		 2tu^2{\partial_{u^1}} + \frac{t^n(n+1)}{n!} \partial_{u^2},\\
 		 {\small \mbox{for}}\,\, n =2\,\, {\small \mbox{in addition:}}\\ 
 3t^2 \partial_t + 2(u^2)^2 \partial_{u^1} + 6 t \sum_{a=1}^m u^a \partial_{u^a}\\
 		\end{array} \\
 	\hline
 	\end{array}
 	\]
	(The contact symmetry dimension of the trivial ODE is $\fM = m^2+(n+1)m+3$.)\\[0.1in]
 	\caption{Vector ODEs realizing $\fS_\bbU = \fU_\bbU$ for $\bbU$ of C-class-type}
 	\label{tab:submax-Kh}
 \end{table}
 
 \subsection{Exceptional scalar cases and conclusion} 
 \label{S:conclusion}
 
By Theorem \ref{th:uub}, we have $\fS_{\bbU} \le \fU_{\bbU}$  and $\fS \le \fU$.  The upper bounds were computed in Lemma \ref{lem:uubs} and \ref{+prs}, from which we obtain (using \eqref{E:U-decomp}):
 \begin{align}
 \fU = \begin{cases}
 \fM - 1, & \mbox{if } m=1,\, n \in \{ 4, 6 \};\\
 \fM - 2, & \mbox{otherwise}.
 \end{cases}
 \end{align}
 These are realized by ODEs in Tables \ref{tab:linearmodels} and \ref{tab:submax1}, so $\fS = \fU$ and Theorem \ref{T:main} is proved.

Let us now turn to completing the proof of Theorem \ref{T:main2}.  The equality $\fS_\bbU = \fU_\bbU$ has already been established for all vector cases and most scalar cases.   The following scalar cases remain:
 \begin{align} \label{E:exc}
 (n,\bbU) = (3, \bbB_3), \quad (\geq 5,\bbA_3), \quad (\geq 7,\bbA_4),
 \end{align}
 for which $\fU_\bbU = \fM-2 = n+3$.  (The $(6,\bbA_4)$ case was treated in Table \ref{tab:submax1}.)  Excluding $n=4$ (for which $\fS = 8$) and $n=6$ (for which $\fS = 10$), we already have $\fS = n+3$ for scalar ODEs of order $n+1 \geq 4$.  From \cite[p.206]{Olver1995}, which relies on results of Lie \cite{Lie1924}, all submaximally symmetric ODEs are either linear (but inequivalent to the trivial ODE $u_{n+1} = 0$) or equivalent to either:
 \begin{align} \label{E:OlvODE}
 n u_{n-1} u_{n+1} - (n+1) (u_n)^2 = 0, \quad\mbox{or}\quad 3 u_2 u_4 - 5 (u_3)^2 = 0.
 \end{align}
 We exclude the linear cases, for which all C-class invariants vanish.  The first ODE in \eqref{E:OlvODE} has already appeared in Table \ref{tab:submax1} (associated to $(3,\bbB_4)$ or $(\geq 4, \bbA_2)$).  The second ODE in \eqref{E:OlvODE} has $\kappa_H$ concentrated in $\bbB_4$ (using the known relative invariants in \S\ref{S:Cclass}). We conclude that 
 \begin{align} \label{E:scU}
 \fS_\bbU \leq n+2 < \fU_{\bbU} =n+3
 \end{align}
   for all cases in \eqref{E:exc} except possibly the $(6,\bbA_3)$ case. The latter case is resolved in \S \ref{S:NAM} (Theorem \ref{T-Exc}) and indeed \eqref{E:scU} also holds in this case.

Let us now exhibit model ODEs with $\kappa_H \not \equiv 0$, $\img(\kappa_H) \subset \bbU $ and all  $\cW_r \equiv 0$ (ODEs of C-class type). The assertions $\cW_r \equiv 0$ and $\img(\kappa_H) \subset \bbU$ are established using Definition \ref{D:WI} and the differential invariants from \S \ref{S:ODE-KH}.\\
\begin{itemize}
\item $(3,\bbB_3)$: The ODE $u_4 = (u_3)^k$ for $k \neq 0,1$ has the 5-dimensional contact symmetry algebra:
 \begin{align}
 \partial_t,\quad \partial_u, \quad t \partial_u, \quad t^2 \partial_u, \quad
 (k-1)t\partial_t + (3k-4)u\partial_u.
 \end{align}
 Generally, both $\cB_3$ and $\cB_4$ are nonzero.  Requiring $\cB_4 = 0$, i.e  $\img (\kappa_H) \subset \bbB_3$ forces $k= \frac{74 + 2 \sqrt{46}}{49}$.  Thus, $\fS_{\bbB_3} = 5 < \fU_{\bbB_3} = 6$.\\

\item $(\geq 5, \bbA_3)$: Consider the following ODE (obtained as $S_{n+1} = 0$ from \cite[p. 475]{Olver1995}):
	\begin{align}\label{M: A3}
	(n-1)^2(u_{n-2})^2u_{n+1}
	-3(n-1)(n+1)u_{n-2}u_{n-1}u_{n}
	+2n(n+1)(u_{n-1})^3 = 0,
	\end{align}
	which has the following $n+2$ contact symmetries when $n \geq 5$:
	\begin{align} \label{E:Sn}
	\begin{array}{c}
	\partial_t,\quad \partial_u, \quad t \partial_t, \quad u\partial_u,\quad  t\partial_u,\quad \ldots,\quad t^{n-3}\partial_u, \quad t^2 \partial_t + (n-3)tu\partial_u.
	\end{array} 
	\end{align}
 (Sidenote: when $n=4$ the ODE \eqref{M: A3} recovers the submaximally symmetric model from Table \ref{tab:submax1} in the $(4,\bbB_6)$ case, which admits eight symmetries: those in \eqref{E:Sn} and additionally $u\partial_t$ and $tu \partial_t + u^2\partial_u$.)

	We have $\cA_2 \equiv 0$ (and $\cW_r \equiv 0$), but $\cA_3 \ne 0$.  When $n=5$, the invariant $\cA_4$ does not arise, so in this case we can assert that $\img(\kappa_H) \subset \bbA_3$ and $\fS_{\bbA_3} = 7 < \fU_{\bbA_3}=8$ (using \eqref{E:scU}).  For $n \geq 6$, since $\cA_4$ was computed only up to the differential ideal $\langle \cA_2, \cA_3, \cW_3 \rangle$, then the formula given in \eqref{E:scalar-inv} for $\cA_4$ is ambiguous, and so we cannot directly use it on \eqref{M: A3}.  From \eqref{E:scU}, we can only assert $\fS_{\bbA_3} \leq n+2 < \fU_{\bbA_3} = n+3$ for $n \geq 6$.  \\
 
\item $(\geq 7, \bbA_4)$:  The ODE $u_{n+1} = (u_{n-1})^2$ admits the following $n+1$ contact symmetries:
\begin{align}
\partial_t, \quad \partial_u, \quad t\partial_u, \quad \ldots,\quad t^{n-2}\partial_u, \quad t\partial_t + (n-3) u\partial_u.
\end{align}
 We confirm that it has vanishing $\cA_2, \cA_3, \cW_3$, so the formula for $\cA_4$ is unambiguous and $\cA_4 \neq 0$, i.e.\ $\kappa_H \not \equiv 0$ and $\img(\kappa_H) \subset \bbA_4$.  Hence, $n+1 \le \fS_{\bbA_4} \le n+2 < \fU_{\bbA_4}=n+3$.\\
 \end{itemize}
 
 This completes the proof of Theorem \ref{T:main2}.  We remark that for $(n, \bbU)=(\ge 6, \bbA_3)$ or $(\ge 7, \bbA_4)$, we currently do not know of any ODE \eqref{ODE:sym} of order $n+1$ with $\kappa_H \not \equiv 0$ and $ \img(\kappa_H) \subset \bbU$ that has contact symmetry dimension $n+2$. (See Remark \ref{R:N7} for further discussion.)  Determining $\fS_{\bbU}$ for these cases remains open.\\
 
 \appendix

 \section{Exceptional scalar cases} 
 \label{S-Exc}

 Fix $(G,P)$ as in \S \ref{S:TODE}, and the effective part $\bbE \subset H^2_+(\fg_-,\fg)$ as given in \S\ref{S-EPS} and \S\ref{S-EPV}.  Let $\bbU \subset \bbE$ be a $\g_{0}$-irrep. Recall from \S \ref{S:ODE-KH} that for ODEs \eqref{ODE:sym} of order $n+1$  with $\kappa_H \not \equiv 0$  and $\img(\kappa_H) \subset \bbU $, an algebraic upper bound $\fU_{\bbU}$ on the submaximal symmetry dimension $\fS_{\bbU}$ is realizable for all vector cases and the majority of scalar cases.  Among the remaining scalar cases $(n,\bbU) = (3, \bbB_3), (\ge5,\bbA_3)$ or $(\ge7, \bbA_4)$, we asserted that $\fS_{\bbU} < \fU_{\bbU}$ for all of these in \S \ref{S:conclusion}, except for $(6,\bbA_3)$, based on the known classification of submaximally symmetric scalar ODEs as described in \cite[p. 206]{Olver1995}.   In this section, we outline a Cartan-geometric method for establishing $\fS_\bbU < \fU_\bbU$ for the exceptional scalar cases, and in particular establish $\fS_{\bbA_3} < \fU_{\bbA_3}$ for $n=6$  (Theorem \ref{T-Exc}).

 \subsection{Local homogeneity and algebraic models} 
 \label{S: AM}

 \begin{lem}\label{L:LH}  
 	For regular, normal Cartan geometries of type $(G, P)$ and a $\g_{0}$-irrep $\bbU \subset \bbE$, suppose that $\fS_{\bbU} = \fU_{\bbU}$. Then any geometry $( \cG \to M, \omega)$ with $\kappa_H$ valued in $\bbU$ and $\dim \finf(\cG, \omega) = \fU_{\bbU}$ is locally homogeneous near any  $u \in \cG$ with $\kappa_H(u)\ne 0$.
 \end{lem}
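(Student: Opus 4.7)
The plan is to exploit the hypothesized saturation $\dim\finf(\cG,\omega)=\fU_{\bbU}$ to force, at every point where $\kappa_H$ is nonzero, that the symmetry algebra covers $\g_-$ at the graded level; this yields infinitesimal transitivity on the base, and continuity of $\kappa_H$ then promotes it to local homogeneity near $u$.

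First, for every $v\in\cG$, Proposition~\ref{prop:filtration}(i) gives $\dim\ff(v)=\dim\finf(\cG,\omega)=\fU_\bbU$, hence also $\dim\fs(v)=\fU_\bbU$ for the associated graded. By Theorem~\ref{th:uub}, $\fs(v)\subseteq\fa^{\kappa_H(v)}$, and when $\kappa_H(v)\neq 0$ the assumption $\img\kappa_H\subseteq\bbU$ places $\kappa_H(v)$ in $\bbU\setminus\{0\}$, so by the very definition $\fU_\bbU=\max\{\dim\fa^{\phi}:0\neq\phi\in\bbU\}$ we have $\dim\fa^{\kappa_H(v)}\leq\fU_\bbU$. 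Combining these inequalities gives the equality $\fs(v)=\fa^{\kappa_H(v)}$ as graded subspaces of $\g$. In particular, since Definition~\ref{Tanakaalgebra} fixes $\fa^{\phi}_-=\g_-$ for any nonzero $\phi$, we conclude $\fs_-(v)=\g_-$ at every such $v$.

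Next, I would translate $\fs_-(v)=\g_-$ into geometric transitivity. The equality says that the inclusion $\ff(v)\subseteq\g$ surjects onto $\g/\fp$ under the natural quotient. Via the Cartan framing $\omega_v:T_v\cG\to\g$ and the identification of $T^0_v\cG$ with the vertical subspace, this is equivalent to the evaluation map $\finf(\cG,\omega)\to T_{\pi(v)}M$, $\xi\mapsto d\pi_v(\xi_v)$, being surjective. By continuity of $\kappa_H$, the set $U:=\{v\in\cG:\kappa_H(v)\neq 0\}$ is an open $P$-invariant neighborhood of $u$, and $\pi(U)$ is an open neighborhood of $\pi(u)$ on which pointwise infinitesimal transitivity holds.

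Finally, a standard orbit-theorem argument for a finite-dimensional Lie algebra of vector fields that attains full tangential rank at every point of an open set shows that $\pi(U)$ is a union of open $\finf(\cG,\omega)$-orbits, which is precisely local homogeneity of $M$ near $\pi(u)$. The only step that demands genuine care is the dimension comparison in the first paragraph above: one needs to exploit both the hypothesis that $\kappa_H$ is valued in the \emph{fixed} irreducible module $\bbU$ and the sharp equality $\dim\finf(\cG,\omega)=\fU_\bbU$ to squeeze $\fs(v)=\fa^{\kappa_H(v)}$ out of the one-sided inclusion supplied by Theorem~\ref{th:uub}; loosening either hypothesis would break the argument. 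Beyond that, everything reduces to formal manipulations and a classical Frobenius-type result.
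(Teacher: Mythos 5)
Your proposal is correct and follows essentially the same route as the paper: the key step in both is the squeeze $\fS_{\bbU}=\dim\fs(u)\le\dim\fa^{\kappa_H(u)}\le\fU_{\bbU}$ forcing $\fs(u)=\fa^{\kappa_H(u)}\supset\g_-$, after which the paper simply invokes Lie's third theorem where you spell out the surjectivity of the evaluation map onto $T_{\pi(u)}M$ and the open-orbit argument. The extra detail you supply is a faithful unpacking of the paper's terse final sentence, not a different method.
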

 \begin{proof}  Fix $u \in \cG$.  By Theorem \ref{th:uub}, $\fs (u) \subset \fa^{\kappa_H(u)}$.  Then by definition of $\fU_\bbU$,
 	\begin{align}
 		\fS_{\bbU} &:=\dm \finf(\cG,\omega) = \dm \fs(u) 
 		\le \dm \fa^{\kappa_H(u)} \le \fU_{\bbU}.
 	\end{align}
 	So, $\fS_{\bbU} = \fU_{\bbU}$ implies $\fs(u) = \fa^{\kappa_H(u)} \supset \g_{-}$. The result then follows by Lie's third theorem.
 \end{proof}
 
 It is well known that a homogeneous Cartan geometry $(\pi : \cG \to M, \omega)$ of fixed type $(G,P)$ can be encoded by algebraic data \cite[Prop 1.5.15]{Andreas-Jan2009}. Fix $u \in \cG$ and let $F^0 \subset F$ denotes  stabilizer of a point $\pi(u) \in M$ and let $\ff^0$ and  $\ff$ denote  the Lie algebras of $F^0$ and $F$ respectively. Then the induced $F$-action on $M$ is  transitive. Any $F$-invariant Cartan connection is completely determined by some distinguished linear map  $\varpi : \ff \to \g $ ({\sl an algebraic Cartan connection} of type $(\g, P)$).  In particular, $\varpi|_{\ff^0}$ is a Lie algebra homomorphism, so $\ker (\varpi)\subset \ff^0$ is an ideal in $\ff$. Since the action of $F$ on $F/F^0$ can be assumed to be {\sl infinitesimally effective} (i.e.\ $\ff^0$  does not contain any non-trivial ideals of $\ff$), then without loss of generality we can restrict to injective maps $\varpi$. Consequently, we can identify $\ff$ with its image $\varpi(\ff)$ in $\g$. Analogous  to \cite[Defn 2.5]{The2021} and in light of the fact that canonical Cartan connections for ODEs satisfy the strong regularity condition (Remark \ref{R-G0}), any homogeneous Cartan geometry arising from an ODE can be encoded as:
 \begin{defn}\label{D:AM}
 	An \emph{algebraic model} $(\ff;\g,\fp )$ of ODE type is a Lie algebra $(\ff, [\cdot,\cdot]_{\ff})$ satisfying: 
 	\begin{itemize}
 		\item[(A1)]$\ff \subset \g$ is a filtered linear subspace such that $\ff^i = \g^i \cap \ff $ and $\fs := \gr (\ff)$ with $\fs_{-}=\g_{-}$;
 		\item[(A2)] $\ff^0$ inserts trivially into ${\kappa} (X, Y) := [X, Y] - [X, Y]_{\ff}$, i.e.  ${\kappa}(Z, \cdot)= 0 \quad\forall Z \in \ff^0$
 		\item[(A3)] $\partial^{\ast} \kappa = 0$  and $\kappa(\g^i, \g^j) \subset \g^{i+j+1} \cap \g^{\min(i,j)-1} \quad \forall i,j$. 
 	\end{itemize}
 \end{defn}
 
 Recall from \S  \ref{subsec:Cartangeom} that  $\kappa_H := \kappa \mod \img \partial^{\ast}$, where $\partial^{\ast}$ is the adjoint of the Lie algebra cohomology differential with respect to a natural inner product on $\g$.
 
 \begin{prop} \label{P:NC}
 	Let $(\ff;\g,\fp )$ be an algebraic model of ODE type. Then
 	\begin{itemize}
 		\item[(a)] $(\ff, [\cdot, \cdot]_{\ff})$ is a filtered Lie algebra. \label{f:LA}
 		\item[(b)] $\ff^0 \cdot \kappa = 0$, i.e.\  $[Z, \kappa(X, Y)]_{\ff} = \kappa([Z, X]_{\ff}, Y) + \kappa(X, [Z, Y]_{\ff})$,\,\,$\forall X,Y \in \ff$ and $\forall Z \in \ff^0$. \label{k:ann}
 		\item[(c)] $\fs \subset \fa^{\kappa_H}$. \label{s:cd}
 	\end{itemize}
 \end{prop}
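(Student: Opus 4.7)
The plan is to prove the three parts in sequence, relying on the key observation that condition (A2) makes the $\ff$-bracket and $\g$-bracket coincide whenever one entry lies in $\ff^0$. For such $Z \in \ff^0$, the derivation $\mathrm{ad}_Z^{\ff}$ therefore extends uniquely to the ambient $\g$-adjoint $\mathrm{ad}_Z^{\g}$, which gives meaning to the $\ff$-subscripted bracket $[Z,\kappa(X,Y)]_{\ff}$ in (b) even though $\kappa(X,Y)$ need not lie in $\ff$.

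For (a), Jacobi is already built into Definition \ref{D:AM}, so only filtration compatibility $[\ff^i,\ff^j]_{\ff}\subset\ff^{i+j}$ requires checking. For $X\in\ff^i$, $Y\in\ff^j$, the $\g$-bracket $[X,Y]$ lies in $\g^{i+j}$ by filteredness, while (A3) gives $\kappa(X,Y)\in\g^{i+j+1}\subset\g^{i+j}$; subtracting yields $[X,Y]_{\ff}\in\g^{i+j}\cap\ff=\ff^{i+j}$. For (b), I would subtract the Jacobi identity for $(\ff,[\cdot,\cdot]_{\ff})$ applied to $Z,X,Y$ from the $\g$-Jacobi identity applied to the same triple. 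Using (A2) to replace $[Z,\cdot]_{\ff}$ by $[Z,\cdot]$ everywhere on the $\ff$-side, the remaining terms organise exactly into the stated Leibniz formula for $\kappa$.

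For (c), the inclusion $\fs_i\subset\fa^{\kappa_H}_i$ is immediate for $i<0$ by (A1). At degree $0$, pick $Z_0\in\fs_0$ and lift to $Z\in\ff^0$; by (b), $Z$ acts as a derivation on the cocycle $\kappa$, so $Z\cdot\kappa=0$ and this descends to $Z\cdot\kappa_H=0$ in the cohomology quotient. Lemma \ref{lem:compreducibility} says that $\ff^1\subset\g^1$ acts trivially on $\kappa_H$, so the same holds for $Z_0\equiv Z \pmod{\ff^1}$, giving $\fs_0\subset\fann(\kappa_H)=\fa^{\kappa_H}_0$. At degree $+1$, only $\fs_1=\R\sfY$ can be non-trivial; then for any $B\in\g_{-1}$, a lift $B'\in\ff^{-1}$ satisfies $[\sfY,B']_{\ff}=[\sfY,B']\in\ff^0$ with $\gr_0$-image $[\sfY,B]\in\g_0$, so the degree-$0$ result just established yields $[\sfY,B]\cdot\kappa_H=0$. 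This gives $\sfY\in\fa^{\kappa_H}_1$ and completes (c).

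The main obstacle I anticipate is the bookkeeping at degree $+1$: tracking a filtered representative in $\ff^0$, its graded image in $\fs_0$, and the $\g$-bracket that matches the Tanaka-prolongation construction of $\fa^{\kappa_H}_1$. Once complete reducibility is invoked to absorb the $\g^1$-ambiguity of lifts, the rest is a direct consequence of the two Jacobi identities together with (A2) and (A3).
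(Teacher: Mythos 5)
Your proof is correct, and it is essentially the argument the paper intends: the paper itself gives no details for Proposition \ref{P:NC}, deferring to \cite[Prop 2.6]{The2021}, and your write-up supplies exactly the standard reasoning behind that citation --- (a) from strong regularity of $\kappa$ plus $\ff^{i+j}=\ff\cap\g^{i+j}$, (b) by subtracting the two Jacobi identities and using (A2) to identify $[Z,\cdot]_{\ff}$ with $[Z,\cdot]$, and (c) by combining (b) with Lemma \ref{lem:compreducibility}, mirroring the Cartan-geometric arguments of Proposition \ref{prop:filtration}(iv) and Theorem \ref{th:uub}. The only point worth flagging is minor and you already handle it: the degree $+1$ step needs $\kappa(\sfY,\cdot)=0$ (horizontality from (A2), since $\sfY\in\ff^1\subset\ff^0$) so that $[\sfY,B']_{\ff}=[\sfY,B']$ has $\gr_0$-image $[\sfY,B]\in\fs_0\subset\fann(\kappa_H)$.
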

 
 \begin{proof}
 	This is the same as for corresponding statements in the parabolic geometry setting \cite[Prop 2.6]{The2021}.
 \end{proof}
 
 Fix $(G, P)$ and denote by $\cN$ the set of all algebraic models   $(\ff;\g,\fp )$ of ODE type. Then $\cN$:
 \begin{enumerate}
 	\item admits $P$-action: for $p \in P$  and $ \ff \in \cN$, $p\cdot \ff := \Ad_{p}(\ff)$. All algebraic models belonging to the same $P$-orbit are considered to be equivalent.
 	\item  a partially ordered set with relation $\le$ defined as follows: for $\ff, \widetilde{\ff} \in \cN$ regard $\ff \le \widetilde{\ff}$ if there exists an injection $\ff \hookrightarrow \widetilde{\ff}$ of Lie algebras. We will focus on maximal elements $\ff$ (for this partial order).
 \end{enumerate}
 \begin{rem}
 	By \cite[Lemma 4.1.4]{KT2017}, to each algebraic model $(\ff; \g, \fp)$ of ODE type, there exists a locally homogeneous  geometry $(\cG \to \cE, \omega)$ of type $(G,P)$ with $\finf(\cG, \omega)$ containing a subalgebra isomorphic to $\ff$. Moreover, if $\ff$ is maximal, then it is isomorphic to $\finf(\cG, \omega)$.
 \end{rem}
 
 At the level of vector spaces, $\ff \subset \g$ can be  understood as the graph of a linear map  on $\fs$ into some subspace $\fs^{\perp}  \subset \fp$ with $\g = \fs \oplus \fs^{\perp}$ as follows. Choosing such a graded subspace $\fs^{\perp}$, we can write
 \begin{align}
 \ff := \bigoplus_i \left\langle x + \fD(x) : x \in \fs_i\right\rangle,
 \end{align}
 for some {\it unique} linear ({\sl deformation}) map $\fD : \fs \to \fs ^{\perp}$ satisfying $\fD(x)\in \fs^{\perp} \cap \g^{i+1}$ for $x \in \fs_i$.  For $\widehat{x}:= x + \fD(x) \in \ff$, we will refer to $x \in \fs$ as the {\sl leading part} and $\fD(x)$ as the {\sl tail}.
 
 \begin{lem} \label{L:fD}
 	Let $T \in \ff^0$ and suppose that $\fs$ and $\fs^{\perp}$ are $\ad_T$-invariant subspaces. Then $T \cdot \fD = 0$, i.e $\ad_T \circ \fD = \fD \circ \ad_T$.	
 \end{lem}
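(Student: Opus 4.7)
Since $\ff$ is by construction the graph of $\fD$ inside $\g = \fs \oplus \fs^{\perp}$, the identity $\ad_T \circ \fD = \fD \circ \ad_T$ is really the statement that, for any $x \in \fs$, applying $\ad_T$ to $\widehat{x} := x + \fD(x) \in \ff$ returns another element of this graph. I would therefore evaluate $[T, \widehat{x}]_{\ff}$ in two complementary ways and compare the $\fs$- and $\fs^{\perp}$-components. Axiom (A2) is what makes this comparison possible: it collapses $[T, \cdot]_{\ff}$ onto the ambient $\g$-bracket whenever $T \in \ff^0$.

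\textbf{Main steps.} Fix $x \in \fs$ and set $\widehat{x} := x + \fD(x) \in \ff$. By Proposition \ref{P:NC}(a), $[T, \widehat{x}]_{\ff} \in \ff$, so there is a unique $y \in \fs$ with $[T, \widehat{x}]_{\ff} = y + \fD(y)$; in particular its $\fs$-component is $y$ and its $\fs^{\perp}$-component is $\fD(y)$. On the other hand, $T \in \ff^0$ together with (A2) gives $\kappa(T,\widehat{x}) = 0$, hence
\begin{align*}
[T, \widehat{x}]_{\ff} \;=\; [T,\widehat{x}] \;=\; [T,x] + [T, \fD(x)].
\end{align*}
The hypothesis that $\fs$ and $\fs^{\perp}$ are $\ad_T$-invariant places $[T,x] \in \fs$ and $[T,\fD(x)] \in \fs^{\perp}$, so this already is the decomposition with respect to $\g = \fs \oplus \fs^{\perp}$. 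Matching components forces $y = [T,x]$ and $\fD(y) = [T,\fD(x)]$, i.e.\ $\fD([T,x]) = [T, \fD(x)]$. As $x \in \fs$ was arbitrary, this gives $\ad_T \circ \fD = \fD \circ \ad_T$, equivalently $T \cdot \fD = 0$.

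\textbf{Where the subtlety sits.} The argument is essentially mechanical once (A2) is invoked, and I do not expect any serious obstacle. The only point worth flagging is that $T \in \ff^0$ is only of filtered degree zero rather than homogeneous of degree zero: its leading part $T_0$ lies in $\fs_0$ while its tail $\fD(T_0) \in \fs^{\perp} \cap \g^1$ is nonzero in general, so neither piece individually need preserve $\fs$ or $\fs^{\perp}$. This is exactly what the hypothesis of the lemma bypasses -- $\ad_T$ is assumed to respect $\fs \oplus \fs^{\perp}$ as a splitting of subspaces (not of graded subspaces), and that is precisely what the $\fs$/$\fs^{\perp}$ bookkeeping in the comparison step requires.
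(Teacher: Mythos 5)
Your proof is correct and follows essentially the same route as the paper's: both use (A2) to replace $[T,\cdot]_{\ff}$ by the ambient bracket, then invoke $\ad_T$-invariance of $\fs$ and $\fs^{\perp}$ and the uniqueness of the graph decomposition to identify $[T,\fD(x)]$ with $\fD([T,x])$. The only cosmetic difference is that the paper works with homogeneous $x \in \fs_i$ to track the filtration degree of the tail explicitly, but your component-matching argument against $\g = \fs \oplus \fs^{\perp}$ covers this.
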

 
 \begin{proof}
 	Recall $\fs, \fs^{\perp} \subset \g$ are graded. Given $x \in \fs_i$, we have $x +\fD(x) \in \ff $ and $[T, x+\fD(x)]_{\ff} \in \ff$. Since $T \in \ff^0$, then $\kappa(T, \cdot) = 0$ and therefore $[T, x+\fD(x)]_{\ff}= [T, x+\fD(x)] = [T, x] + [T, \fD(x)]$. By $\ad_T$-invariancy of $\fs$ and $\fs^{\perp}$, we have $[T, x]\in \fs$ and $[T, \fD(x)]\in \fs^{\perp} \cap \g^{i+1}$. The uniqueness of $\fD$ then implies $[T, \fD(x)] = \fD([T, x])$.
 \end{proof}

 \subsection{Realizability of a curvature-constrained upper bound} 
 \label{S:RU} 

 From \S \ref{S: AM}, $\fS_{\bbU} = \fU_{\bbU}$ implies local homogeneity (Lemma \ref{L:LH}) and then the problem of realizability of $\fU_{\bbU}$ reduces to that of existence of an algebraic model $(\ff; \g, \fp)$ of ODE type with $\kappa_H \not \equiv 0$, $\img(\kappa_H) \subset \bbU$ and $\dim \ff= \fU_{\bbU}$. 
Recall from \S \ref{S:TODE}, \S \ref{S:BG} and \S \ref{S-EPS}:
\begin{itemize}
\item  basis for $\g \cong (\fsl_2 \times \gl_1) \ltimes (\V_n \tensor \R) $: $\sfX, \sfH, \sfY$ (standard $\fsl_2$-triple), $E_0,\ldots,E_n$ (for $\fsl_2$-irrep module $\V_n$) and $\id_1$. And $\sfZ_1, \sfZ_2$ are the bi-grading elements.
\item $\bbU \subset \bbE$ is one-dimensional with bi-grade $(a,b) = (1,2), (2,1), (3,1)$ for $\bbB_3, \bbA_3, \bbA_4$ respectively.  Thus, for any $0 \neq \phi \in \bbU$, we have $\fann(\phi) = \langle T:= b\sfZ_1-a\sfZ_2\rangle$. Since $\bbU$ is prolongation rigid (Lemma \ref{lem:uubs}), then $\fa^\phi_1 = 0$ for any $0 \neq \phi \in \bbU$. So,  $\fa :=\fa^{\phi} = \g_{-} \oplus \fann(\phi) \subset \g$, is a  graded subalgebra of dimension $n+3$.
\end{itemize}

 \begin{prop} \label{P:DR}
 	Fix $(n,\bbU) = (3, \bbB_3), (\geq 5,\bbA_3)$ or $(\geq 7, \bbA_4)$. If there exists an algebraic model $(\ff;\fg,\fp)$ of ODE type with  $\kappa_H \not \equiv 0$, $\img(\kappa_H) \subset \bbU$ and $\dim \ff = \fU_\bbU = n+3= \fM-2$,  then fixing $0 \neq \phi \in \bbU$ and using the $P$-action $\ff \mapsto \Ad_p \ff$, we may normalize to $\ff = \fa^\phi$ as filtered vector spaces.
 \end{prop}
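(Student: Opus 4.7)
The plan has three steps. First, Proposition \ref{P:NC}(c) yields $\fs := \gr(\ff) \subseteq \fa^{\kappa_H}$; both spaces have dimension $n+3$ (the latter by the prolongation-rigidity of $\bbU$ established just above the proposition), so $\fs = \fa^{\kappa_H}$ as graded subspaces of $\g$. Second, since $\bbU$ is a one-dimensional $G_0$-irrep with nonzero bi-grading character $(a,b)$, the action of $G_0 \cong (\R^\times)^2$ is transitive on $\bbU \setminus \{0\}$; I pick $g \in G_0$ with $g \cdot \kappa_H = \phi$ and replace $\ff$ by $\Ad_g \ff$, so now $\fs = \fa^\phi$ as graded subspaces.

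The third step normalizes the ungraded tails. Write $\ff = \{x + \fD(x) : x \in \fa^\phi\}$ for the unique deformation map $\fD : \fa^\phi \to \fs^\perp$ (with $\fD(x) \in \g^{i+1}$ on $(\fa^\phi)_i$) relative to the $\ad_T$-invariant complement $\fs^\perp := \langle S \rangle \oplus \g_1$, where $S$ is any complement of $T$ in the abelian $\g_0$; this is $\ad_T$-invariant because $[T,\sfY] = b\sfY$. An eigenvalue check on the slot $\fD(T) \in \langle \sfY \rangle$ (the $\ad_T$-eigenvalue $b$ of $\sfY$ disagrees with $T$'s eigenvalue $0$) forces $\fD(T) = 0$, so $T \in \ff^0$; Lemma \ref{L:fD} then gives $\ad_T \circ \fD = \fD \circ \ad_T$. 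Using the $\ad_T$-eigenvalues $bc-ad$ on $\g_{c,d}$ together with the pairs $(a,b) = (1,2), (2,1), (3,1)$ for $\bbB_3, \bbA_3, \bbA_4$ from Table \ref{tab:scalar}, a direct enumeration finds no admissible nonzero $\fD$-component for $(3,\bbB_3)$ (giving $\ff = \fa^\phi$ at once), and exactly two admissible slots in each of the remaining cases. These surviving components are then killed by the residual one-parameter action $\Ad_{\exp(c\sfY)}$, choosing $c$ to match the deformation of $\fa^\phi$ induced by this action.

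The main obstacle will be the final reduction in the $(\geq 5, \bbA_3)$ and $(\geq 7, \bbA_4)$ cases: a priori there are two $\ad_T$-equivariant $\fD$-components but only one $\exp(\g_1)$-parameter of $P$-freedom, so one must argue that the algebraic-model axioms (Jacobi for $[\cdot,\cdot]_\ff$, normality $\partial^\ast \kappa = 0$, and the strong regularity condition of Remark \ref{R-G0}) couple the two parameters into a single effective degree of freedom matched by $\exp(\g_1)$. The verification reduces to a finite computation using the explicit bracket of $\g = (\fsl_2 \times \gl_1) \ltimes (\V_n \otimes \R)$, in particular the action of $\sfY$ on $V$ as the $\fsl_2$-lowering operator.
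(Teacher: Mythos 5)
There are two genuine gaps. First, your normalization of the tail of $T$ is circular: you argue that ``the $\ad_T$-eigenvalue $b$ of $\sfY$ disagrees with $T$'s eigenvalue $0$, forcing $\fD(T)=0$, so $T\in\ff^0$'', but the equivariance $T\cdot\fD=0$ that this eigenvalue check relies on is the conclusion of Lemma \ref{L:fD}, whose hypothesis is precisely that the exact element $T$ (with no tail) already lies in $\ff^0$. A priori $\ff$ contains only $\widehat{T}=T+\lambda\sfY$, and nothing in the definition of $\fD$ rules out $\lambda\neq 0$. This is exactly what the $P$-action in the statement is for: one computes $\Ad_{\exp(t\sfY)}(\widehat{T})=b\sfZ_1-a\sfZ_2+(\lambda-bt)\sfY$ and uses $b\neq 0$ to set $t=\lambda/b$. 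Only after this normalization does Lemma \ref{L:fD} apply. Note this consumes the entire $P_+=\exp(\g_1)$ freedom (the stabilizer of the normalization $\fD(T)=0$ in $P_+$ is trivial), so there is no ``residual one-parameter action'' left over for your final step.

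Second, and more importantly, your treatment of the two surviving $\ad_T$-equivariant slots in the $\bbA_3$ and $\bbA_4$ cases (correctly identified as $\fD(E_{n-a/b})\in\langle\sfZ_1\rangle$ and $\fD(E_{n+1-a/b})\in\langle\sfY\rangle$) is not carried out, and the proposed mechanism --- one group parameter plus an unspecified coupling from the axioms --- cannot work as described since that parameter is already spent. The missing idea is the C-class property: since $\img(\kappa_H)\subset\bbU$ with $\bbU$ of C-class type, all generalized Wilczynski invariants vanish, so the ODE is of C-class and $\kappa(\sfX,\cdot)=0$, whence $[\sfX,\cdot]_\ff=[\sfX,\cdot]$. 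The eigenvalue argument is then only needed for the generators $\sfX,E_n$ of $\g_{-1}$ (whose tail slots have $T$-eigenvalues $-a,-a+b,b,2b$, all nonzero for $(a,b)\in\{(1,2),(2,1),(3,1)\}$), and the remaining $E_i=[\sfX,E_{i+1}]_\ff$ are generated inside $\ff$ with no tails by downward induction. This bypasses the slot enumeration for $i<n$ entirely and closes the cases you flagged as obstacles.
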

 
 \begin{proof}
 	Suppose such an algebraic model with $\fs := \gr(\ff)= \fa^\phi$ exists. Let $\widehat{T} \in \ff^0$ with leading part $T$, so $\widehat{T} := b\sfZ_1 -a\sfZ_2 + \lambda \sfY$.  We use the $P_{+}$- action to normalize $\lambda = 0$:
 	\begin{align}
 		\Ad_{\exp(t\sfY)}(\widehat{T}) &= \exp(\ad_{t\sfY})(\widehat{T})= \widehat{T} + [t\sfY,\widehat{T}]+ \frac{1}{2!}[t\sfY,[t\sfY, \widehat{T}]] + \cdots
 		= b\sfZ_1 - a\sfZ_2 + (\lambda -bt)\sfY.
 	\end{align}
For our cases of interest, $(a,b) \in \{ (1,2), (2,1), (3,1) \}$, so $b\neq 0$ and choosing $t = \frac{\lambda}{b}$ normalizes $\widehat{T} = T$. So, $T = b\sfZ_1 -a\sfZ_2 \in \ff^0$ and by property (A2) of Definition \ref{D:AM},  we have
 	$[T, \cdot]_{\ff} := [T, \cdot]$. Consequently, $\fs$ and $\fs^{\perp} := \langle \sfZ_1, \sfY \rangle$ are $\ad_T$-invariant graded subspaces of $\g$, so by Lemma \ref{L:fD}, the deformation map $\fD : \fs \to \fs^{\perp}$ satisfies $T \cdot \fD = 0$.
	
We claim that $\fD = 0$.  Equivalently, for $\widehat{\sfX}, \widehat{E}_i \in \ff$ with leading parts $\sfX, E_i$ respectively, we claim that $\widehat\sfX = \sfX$ and $\widehat{E}_i = E_i$.  First focus on $\widehat\sfX$ and $\widehat{E}_n$, whose tails are valued in $\fs^\perp = \langle \sfZ_1, \sfY \rangle$.  Recall from Figure \ref{F:bi-grading} that $\sfX, E_n, \sfZ_1, \sfY$ are of bi-grades $(-1,0),(0,-1),(0,0),(1,0)$.  Letting $\omega^n$ and $\omega^{\sfX}$ denote dual basis elements to $E_n$ and $\sfX$ respectively, the eigenvalues of $T = b\sfZ_1 - a\sfZ_2$ acting on 
 \begin{align}
 \omega^{n} \tensor \sfZ_1, \quad \omega^{n} \tensor \sfY, \quad \omega^{\sfX} \tensor \sfZ_1, \quad \omega^{\sfX} \tensor\sfY
 \end{align}
 are $-a,-a+b, b, 2b$.  None of these are zero, so the condition $T \cdot \fD = 0$ forces $\fD(\sfX) = 0 = \fD(E_n)$ and hence $\widehat\sfX = \sfX$ and $\widehat{E}_n = E_n$.  Any ODE with $\kappa_H$ concentrated in any of the  C-class modules $\bbB_3,\bbA_3,\bbA_4$ is of C-class, so $\kappa({\sfX}, \cdot) = 0$ (see discussion in \S\ref{S:Cclass}) and $[{\sfX}, \cdot]_{\ff} = [{\sfX}, \cdot]$.  Since $\sfX, E_n \in \ff$, then $\ff \ni [\sfX,E_i]_\ff = [\sfX,E_i] = E_{i-1}$ inductively from $i=n$ to $i=1$.  Thus, $\widehat{E}_i = E_i \; \forall i$, so $\fD = 0$ and $\ff = \fs = \fa^\phi$.
 \end{proof}

 \subsubsection{Non-existence of algebraic models for the exceptional scalar cases}
 \label{S:NAM}

We prove that for $(n, \bbU) = (6, \bbA_3)$, there are no algebraic models $(\ff; \fp, \g)$ with $\kappa_H \not \equiv 0$, $\img(\kappa_H) \subset \bbA_3$, and $\dim \ff = \fU_{\bbA_3}$. Thus, $\fU_{\bbA_3}$ is not realizable, i.e.\ $\fS_{\bbA_3} < \fU_{\bbA_3}$ (Theorem \ref{T-Exc}).  From \S  \ref{subsec:Cartangeom}, $\kappa_H := \kappa \mod \img \partial^{\ast}$ with $\partial^* \kappa = 0$, but the determination of $\partial^*$ is rather tedious, requiring specific information about the inner product on $\g$.  We have not provided details of this in our article since for our purposes here they can be completely circumvented.  Namely in the proof of Theorem \ref{T-Exc}, instead of showing that ``normal filtered deformations'' provided by $\kappa$ do not exist, we show that arbitrary ``filtered deformations'' do not exist.  In a similar manner, $\fS_{\bbU} < \fU_{\bbU}$ can be established for $(n,\bbU) = (3, \bbB_3),  (5, \bbA_3),  (\geq 7, \bbA_3)$ or $(\geq 7, \bbA_4)$.

  \begin{thm}\label{T-Exc}
  There are no  algebraic models $(\ff;\fg,\fp)$ for seventh order ODEs \eqref{ODE:sym} with $\kappa_H \not \equiv 0$, $\img(\kappa_H) \subset \bbA_3$ and $\dim \ff = \fU_{\bbA_3} = 9$. Thus, $\fS_{\bbA_3} \le 8$. 
 \end{thm}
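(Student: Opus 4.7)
The plan is to show directly that any admissible deformation cochain $\kappa$ must vanish, so $\kappa_H \equiv 0$ and the hypothesis $\kappa_H \not\equiv 0$ fails. By Proposition~\ref{P:DR}, after normalization we may take $\ff = \fa^{\phi} = \g_{-} \oplus \langle T \rangle$ as a filtered subspace of $\g$, with $T = \sfZ_1 - 2\sfZ_2$, and the deformation is then encoded entirely by $\kappa \in \bigwedge^{2} \g_{-}^{\ast} \otimes \g$ subject to: $\kappa(\sfX, \cdot) = 0$ (the C-class condition, since $\bbA_3$ is C-class type); $\kappa(T, \cdot) = 0$ and $T \cdot \kappa = 0$ from Definition~\ref{D:AM}(A2) and Proposition~\ref{P:NC}(b); strong regularity (A3); the Jacobi identity for $[\cdot, \cdot]_{\ff} := [\cdot, \cdot] - \kappa$; and the crucial constraint that $\kappa(E_i, E_j) = -[E_i, E_j]_{\ff} \in \ff$, so values lie in $\ff$ rather than in all of $\g$.

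The first step is a $T$-weight enumeration. For $n = 6$, the elements $\sfX, E_0, \ldots, E_6$ of $\g_{-}$ carry $T$-weights $-1$ and $-4, -3, \ldots, 2$; the $T$-weight-zero part of $\ff$ is spanned by $E_4$ and $T$, because only the combination $T = -\tfrac{1}{2}\sfH - \id_1$, and neither $\sfH$ nor $\id_1$ individually, lies in $\ff$. Hence a nonzero $\kappa(E_i, E_j)$ can only take a value of $T$-weight $i + j - 8$ in $\ff$, producing a modest finite-dimensional space of candidate cochains indexed by the parameter $s := i + j$ with $4 \leq s \leq 10$, with strong regularity automatic in this range.

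The next step imposes the Jacobi identity. Triples of the shape $(T, \cdot, \cdot)$ are automatic from weight homogeneity. Triples $(\sfX, E_i, E_j)$, using $[E_i, E_j] = 0$ in $\g$ and $\kappa(\sfX, \cdot) = 0$, reduce to the $\sfX$-equivariance recursion $[\sfX, \kappa(E_i, E_j)] = \kappa(E_{i-1}, E_j) + \kappa(E_i, E_{j-1})$, expressing coefficients at lower $s$ in terms of those at higher $s$, with a consistency relation at the bottom ($s = 4$) pinning down one linear constraint. Pure triples $(E_i, E_j, E_k) \subset V$ yield quadratic relations of the shape $\sum_{\circ} [E_i, \kappa(E_j, E_k)] = \sum_{\circ} \kappa(E_i, \kappa(E_j, E_k))$. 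Three well-chosen triples should suffice: $(3,5,6)$ pairs $[E_6, \kappa(E_3, E_5)]$ against the leading parameter $\alpha$ defined by $\kappa(E_4, E_6) = \alpha E_6$, forcing $\alpha \cdot a_{45} = 0$ (where $\kappa(E_4, E_5) = a_{45} E_5$); $(3, 4, 5)$ couples the $s = 7$ parameters to these; and $(2, 5, 6)$ yields a further quadratic relation that together with the first compels both $\alpha = 0$ and $a_{45} = 0$. Cascading back through the $\sfX$-recursion then annihilates every remaining coefficient, so $\kappa \equiv 0$, hence $\kappa_H \equiv 0$, contradicting the hypothesis and yielding $\fS_{\bbA_3} \leq \fU_{\bbA_3} - 1 = 8$.

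The main obstacle is combinatorial bookkeeping: although each bracket computation in $\g$ is routine, tracking the interaction between the seven levels of the $\sfX$-recursion and the quadratic triple-Jacobi relations requires care. A systematic tabulation of the coefficients $\kappa(E_i, E_j)$ indexed by $(i, j)$, together with a list of the nonzero brackets $[E_k, \kappa(E_i, E_j)]$ contributing to each Jacobi identity, should make the cascade of vanishings transparent.
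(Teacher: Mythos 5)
Your strategy is the same as the paper's: normalize $\ff = \fa^{\phi}$ via Proposition~\ref{P:DR}, cut down the candidate cochains using $T$-equivariance, horizontality, $\kappa(\sfX,\cdot)=0$ and the constraint $\kappa(E_i,E_j)=-[E_i,E_j]_{\ff}\in\ff$, then derive the $\sfX$-equivariance recursion from $\Jac^{\ff}(\sfX,\cdot,\cdot)=0$ and finish with a few Jacobi triples inside $V$. Your weight bookkeeping is correct ($T$-weights $-1$ for $\sfX$ and $i-4$ for $E_i$; weight-zero part of $\ff$ spanned by $E_4,T$; values of $\kappa(E_i,E_j)$ of weight $i+j-8$), and this reproduces the paper's table of admissible $2$-cochains of bi-grades $(2,1)$ and $(4,2)$.

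The gap is that the decisive computation is only guessed at, and the guesses do not match how the elimination actually proceeds. First, your leading parameter $\alpha$ (the coefficient of $\omega^4\wedge\omega^6\otimes E_6$) is already killed by the \emph{linear} recursion: applying $[\sfX,\kappa(E_5,E_6)]=\kappa(E_4,E_6)+\kappa(E_5,E_5)$ with $\kappa(E_5,E_6)=0$ (its value would need $T$-weight $3$, which does not occur in $\ff$) forces $\alpha=0$; similarly the recursion at levels $s=9,8$ kills the $T$-valued components, and the consistency relation at $s=4$ leaves exactly a two-parameter family $(\lambda,\mu)$ — an $E_k$-valued chain and an $\sfX$-valued piece at $s=7$. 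So your triple $(3,5,6)$, which yields $\alpha\,a_{45}=0$, is vacuous, and the claim that quadratic relations are what ``compel $\alpha=0$ and $a_{45}=0$'' misidentifies where the work happens. Second, the assertion that your three chosen triples ``should suffice'' and that the vanishings then ``cascade'' is not verified; in the paper the residual parameters are eliminated by the specific identities $\Jac^{\ff}(E_2,E_4,E_6)=0$ (killing $\lambda$) and $\Jac^{\ff}(E_1,E_2,E_5)=0$ (killing $\mu$), and whether your particular triples produce nondegenerate relations on the surviving family must actually be checked — a generic triple can easily give $0=0$. Since the entire content of the theorem is this finite linear-plus-quadratic elimination, leaving it as a plausibility argument leaves the proof incomplete.
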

 \begin{proof} Note that $n=6$. Fix $0 \neq \phi \in \bbA_3$ (bi-grade $(2,1)$), $\fa := \fa^\phi$, and $\fa_0 = \langle T \rangle$, where $T = \sfZ_1 - 2\sfZ_2$.  Assume there is an algebraic model $(\ff;\fg,\fp)$ of ODE type with $\fs := \gr(\ff)= \fa$. By Proposition \ref{P:DR}, we may assume that $\ff = \fa$. Let $\{ \omega^0, \ldots, \omega^n,\omega^\sfX \}$ denote the dual basis to $\{ E_0,\ldots, E_n, \sfX \}$. We note that any $\beta \in \wedge^{2}(\sfrac{\g}{\fp})^{\ast}\tensor \g$ has $\sfZ_2$-degree at most 2. Since $\ff^0 \cdot \kappa = 0$ (Proposition \ref{P:NC} (b)) and $\kappa(\sfX, \cdot) = 0$ (the ODE is of C-class), then $\kappa$ is a linear combination of the 2-cochains below:
 	\[
 	\begin{array}{|c|c|}
 	\hline 
 	\mbox{Bi-grade}  & \mbox{2-cochains}\\
 	\hline \hline
 	(2,1) & \begin{array}{l}
 	\omega^0 \wedge \omega^4 \tensor E_0, \quad
	\omega^1 \wedge \omega^3 \tensor E_0, \quad 
	\omega^0 \wedge \omega^5 \tensor E_1, \quad 
	\omega^1 \wedge \omega^4 \tensor E_1, \\
	\omega^2 \wedge \omega^3 \tensor E_1, \quad
 	\omega^0 \wedge \omega^6 \tensor E_2, \quad
	\omega^1 \wedge \omega^5 \tensor E_2, \quad
	\omega^2 \wedge \omega^4 \tensor E_2, \\
	\omega^1 \wedge \omega^6 \tensor E_3, \quad
	\omega^2 \wedge \omega^5 \tensor E_3, \quad
 	\omega^3 \wedge \omega^4 \tensor E_3, \quad
	\omega^2 \wedge \omega^6 \tensor E_4, \\
	\omega^3 \wedge \omega^5 \tensor E_4, \quad
	\omega^3 \wedge \omega^6 \tensor E_5, \quad
	\omega^4 \wedge \omega^5 \tensor E_5, \quad
 	\omega^4 \wedge \omega^6 \tensor E_6
 	\end{array} \\ \hline
 	(4,2)& \begin{array}{l}
 	 \omega^1 \wedge \omega^6 \tensor \sfX,\quad \omega^2 \wedge \omega^5 \tensor \sfX,\quad \omega^3 \wedge \omega^4 \tensor \sfX,\quad \omega^2 \wedge \omega^6 \tensor T,\quad \omega^3 \wedge \omega^5 \tensor T
 	\end{array}\\
 	\hline	
 	\end{array}
 	\]
 We observe that all such 2-cochains are regular and satisfy the strong regularity condition, i.e.\ $\kappa(\g^i, \g^j) \subset \g^{i+j+1} \cap \g^{\min(i,j)-1} \quad \forall i,j$.

 Next, we show that the Jacobi identity for $(\ff, [\cdot, \cdot]_{\ff})$ forces $\kappa \equiv 0$. For all $x,y,z \in \ff$ , define
 \begin{align}
 \Jac^\ff(x,y,z) := [x,[y,z]_{\ff}]_{\ff} -[[x,y]_{\ff}, z]_{\ff} -[y, [x, z]_{\ff}]_{\ff}.
 \end{align}
 For any $y, z \in \ff$, a direct computation shows that $ 0 = \Jac^\ff(\sfX,y,z)= (\sfX \cdot \kappa)(y, z)$.  Expanding this gives many conditions (see the Maple file accompanying the arXiv submission of this article) and this leads to:
\begin{align}
\begin{split}
\kappa &= \lambda \left[(\omega^0 \wedge \omega^4-\omega^1 \wedge \omega^3) \tensor E_0 + (\omega^0 \wedge \omega^5-\omega^2 \wedge \omega^3) \tensor E_1 \right.\\
&\quad\quad+  (\omega^0 \wedge \omega^6+\omega^1 \wedge \omega^5-\omega^2 \wedge \omega^4)\tensor E_2 + (2\omega^1 \wedge \omega^6-\omega^3 \wedge \omega^4)\tensor E_3 \\
&\quad\quad + \left.(2\omega^2 \wedge \omega^6 - \omega^3 \wedge \omega^5)\tensor E_4 + (\omega^3 \wedge \omega^6 - \omega^4 \wedge \omega^5)\tensor E_5\right]\\
&\quad + \mu (\omega^1 \wedge \omega^6 - \omega^2 \wedge \omega^5+\omega^3 \wedge \omega^4)\tensor \sfX.
\end{split}
\end{align}
Then $\Jac^{\ff}(E_2,E_4,E_6) =0$ implies $\lambda = 0$, while $\Jac^{\ff}(E_1,E_2,E_5) =0$ then forces $\mu = 0$, and hence $\kappa \equiv 0$. Thus, an algebraic model with $0 \neq \kappa_H \subset \bbA_3$ with $\dim \ff = \fU_{\bbA_3}$ does not exist.
 \end{proof}

 \begin{rem}\label{R:N7}
 Fix  $(n, \bbU)=(\ge 6, \bbA_3)$ or $(\ge 7, \bbA_4)$ and recall from Table \ref{tab:scalar} that  the bi-grades $(a, b) $ for the \Cclass{} modules $\bbA_3$ and $\bbA_4$ are $(2,1)$ and $(3,1)$, respectively. Then  from \S \ref{S:conclusion}, we have  
 \begin{align} \label{E:A4}
\fS_{\bbU} \le  n+2 < \fU_{\bbU} = n+3 = \fM-2.
 \end{align} 
 For $0 \ne \phi \in \bbU$, we have $\fa:=\fa^{\phi} = \g_{-} \oplus \fann(\phi) =\g_{-} \oplus \langle T:= b\sfZ_1 -a\sfZ_2 \rangle \subset \g$, which is a graded subalgebra of dimension $n+3$.  If there exists an ODE whose associated Cartan geometry $(\cG \to M, \omega)$ satisfies $0 \ne \kappa_H(u) \in \bbU$,\, $\forall u \in \cG$, then  from Theorem \ref{th:uub} we have the graded Lie algebra inclusion 
 \begin{align}
 \fs(u) \subset \fa^{\kappa_H(u)} = \fa, \quad \forall u \in \cG.
 \end{align}
 By \eqref{E:A4}, this inclusion is proper and a priori we do not need to have $\fg_- \subset \fs(u)$.  If the contact symmetry dimension is $\fU_{\bbU}-1 = n+2$, then there are three possibilities to investigate:
 \begin{enumerate}[(i)]
 \item inhomogeneous case: $\fs(u) = \langle E_0, \ldots, E_n, T \rangle$;
 \item inhomogeneous case: $\fs(u) = \langle E_0, \ldots, E_{n-1}, X, T \rangle$;
 \item homogeneous case: $\fs(u) = \langle E_0, \ldots, E_n, X \rangle = \fg_-$.
 \end{enumerate}
 Thus, identifying $\fS_{\bbU}$ is more difficult and at this point we can only assert that:
 \begin{align} 
 \fS_{\bbA_3} \le n+2, \qquad n+1 \le \fS_{\bbA_4} \le n+2.
 \end{align}
\end{rem}

 \section*{Acknowledgements}

  The authors are grateful for helpful conversations with Boris Kruglikov, and acknowledge the use of the DifferentialGeometry package in Maple.  The research leading to these results has received funding from the Norwegian Financial Mechanism 2014-2021 (project registration number 2019/34/H/ST1/00636) and the Troms\o{} Research Foundation (project ``Pure Mathematics in Norway'').

\end{document}